\setlist[itemize]{itemsep=0pt}
\setlist[enumerate]{itemsep=0pt}
\newcommand*{\textitplusparen}[1]{\textit{#1:}}
\setlist[description]{itemsep=0pt, font=\normalfont\textitplusparen}
\newcommand\R{\mathbb{R}}
\newcommand\esp[1]{\mathbb{E}\left[#1\right]}
\newcommand\expo{\mathrm{Exp}}
\newcommand\e{\mathrm{e}}
\newcommand\A{{\mathcal A}}
\newcommand\C{{\mathcal C}}
\newcommand\D{{\mathcal D}}
\newcommand\E{{\mathcal E}}
\newcommand\I{{\mathcal I}}
\newcommand\J{{\mathcal J}}
\newcommand\K{{\mathcal K}}
\newcommand\V{{\mathcal V}}
\newcommand\Z{{\mathcal Z}}
\newcommand\ind{{\mathbb I}}
\newcommand\inde{\ind_0}
\newcommand\link{\sim}
\newcommand\nlink{\nsim}
\newtheorem{theorem}{Theorem}
\newtheorem{lemma}[theorem]{Lemma}
\newtheorem{proposition}[theorem]{Proposition}
\theoremstyle{definition}
\newtheorem{definition}[theorem]{Definition}
\newtheorem{assumption}[theorem]{Assumption}
\theoremstyle{remark}
\newtheorem*{remark*}{Remark}
\definecolor{red}{RGB}{255,0,0}
\definecolor{blue}{rgb}{0.0, 0.4, 0.65}
\definecolor{orange}{RGB}{253,188,64}
\definecolor{green}{RGB}{154,205,50}
\definecolor{orangeplot}{RGB}{245,138,42}
\definecolor{greenplot}{RGB}{0,200,0}
\tikzset{
	class/.style = {draw, minimum size=.6cm, fill=blue!20},
	smallclass/.style = {class, minimum size=.6cm},
	server/.style = {draw, circle, minimum size=.7cm, fill=blue!20},
	fcfs/.style={
		draw,
		rectangle split,
		rectangle split parts=#1,
		rectangle split horizontal,
		rectangle split empty part width=.44cm,
		rectangle split empty part height=.6cm,
		inner xsep=0cm,
		inner ysep=0cm,
	},
}
\pgfplotsset{compat=1.8}
\pgfplotsset{
	table/col sep = {comma},
	table/search path = {./},
	defaultplot/.style={
		xlabel near ticks, ylabel near ticks,
		xtick style={draw=none}, ytick style={draw=none},
		every axis plot/.append style={
			thick,
		},
	},
}
\pgfplotsset{
	loadplotstyle/.style={defaultplot,
		xlabel={Load $\rho$ of class~1},
		xmin=0, ymin=0,
		grid=major,
		legend pos=north west,
		legend style={
			cells={anchor=west, align=left},
			at={(0, 1.25)},
			/tikz/every even column/.append style={column sep=0.4cm},
		},
		legend columns=10,
		legend cell align={left},
		width=.48\linewidth, height=.3\linewidth,
	},
	probaplotstyle/.style={loadplotstyle,
		ylabel={Waiting probability},
		xmin=0, xmax=1,
		ymin=0, ymax=1,
	},
	meanplotstyle/.style={loadplotstyle,
		ylabel={Mean matching time},
		xmin=0, xmax=1,
		ymin=0, ymax=50,
		restrict y to domain=0:100,
	},
}
\begin{document}
	
	\title{Stochastic non-bipartite matching models and~order-independent~loss~queues%
		\footnote{Author version of the paper available at \url{https://doi.org/10.1080/15326349.2021.1962352}.}}
	\author{C\'eline Comte}
	\affil{Eindhoven University of Technology,
		The Netherlands \\
		\href{mailto:c.m.comte@tue.nl}{c.m.comte@tue.nl}%
	}
	\date{\vspace{-1cm}}
	
	\maketitle
	
	\begin{abstract}
		\noindent
		The problem of appropriately matching items subject to compatibility constraints arises in a number of important applications. While most of the literature on matching theory focuses on a static setting with a fixed number of items, several recent works incorporated time by considering a stochastic model in which items of different classes arrive according to independent Poisson processes and assignment constraints are described by an undirected non-bipartite graph on the classes. In this paper, we prove that the Markov process associated with this model has the same transition diagram as in a product-form queueing model called an order-independent loss queue. This allows us to adapt existing results on order-independent (loss) queues to stochastic matching models and, in particular, to derive closed-form expressions for several performance metrics, like the waiting probability or the mean matching time, that can be implemented using dynamic programming. Both these formulas and the numerical results that they allow us to derive are used to gain insight into the impact of parameters on performance. In particular, we characterize performance in a so-called heavy-traffic regime in which the number of items of a subset of the classes goes to infinity while items of other classes become scarce.
		\\[.1cm]
		\textbf{Keywords:} Stochastic matching model,
		order-independent queue,
		product-form stationary distribution,
		dynamic programming,
		heavy-traffic regime.
	\end{abstract}

	\section{Introduction}
	
	Matching items with one another
	in such a way that an individual
	item's preferences are met
	is a well-known problem
	in mathematics, computer science, and economics.
	Classical applications include
	assigning users to servers
	in distributed computing systems,
	joining components in assembly systems,
	matching passengers to drivers
	in ride-sharing applications,
	and assigning questions to experts
	in question-and-answer websites.
	
	\begin{figure}[ht]
		\centering
		\begin{tikzpicture}
			\def\delta{1.2cm}
			
			\node[class, fill=green!60] (1) {1};
			\node[class, fill=red!15] (2)
			at ($(1)-(0,\delta)$) {2};
			\node[class, fill=yellow!60] (3)
			at ($(1)!.5!(2)+(\delta,0)$) {3};
			\node[class, fill=orange!60] (4)
			at ($(3)+(1.1*\delta,0)$) {4};
			
			\draw[-] (3) -- (1) -- (2) -- (3) -- (4);
			
			\node at ($(1)-(1.9cm,0)$) {};
			\node at ($(4)+(1.9cm,0)$) {};
		\end{tikzpicture}
		\caption{An undirected non-bipartite
			compatibility graph.}
		\label{fig:toy-graph}
	\end{figure}
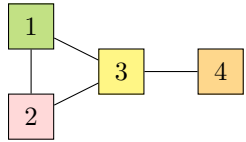
	
	While most of the literature on matching theory
	focuses on a \emph{static} setting
	in which a fixed set of items has to be matched,
	applications to modern
	large-scale service systems
	require that we also consider the time dimension.
	In assembly systems for instance,
	components of different types
	arrive at random instants
	as they are manufactured,
	and the primary objective is to have
	sufficient balance between components
	to maximize throughput
	while also minimizing the number
	of components to be stored.
	This observation gave rise to models
	that can be seen as stochastic generalizations
	of classical matching problems.
	In these stochastic matching models,
	items of different classes
	arrive according to independent Poisson processes.
	Compatibility constraints between items
	are described by an undirected graph on the classes,
	so that there is an edge between two classes
	if items of these classes
	can be matched with one another.
	In the example of \figurename~\ref{fig:toy-graph},
	class-$3$ items can be matched
	with items of classes~$1$, $2$ and~$4$,
	and items of classes~$1$ and $2$
	can also be matched with one another.
	A \emph{matching policy} is a decision rule
	that specifies, whenever an item arrives,
	to which compatible waiting unmatched item,
	if any, it is matched.
	The primary objective of a matching policy
	is to make the system \emph{stable}
	in the sense that
	the stochastic process describing
	the evolution of the number
	of unmatched items over time is ergodic.
	The secondary objective
	is to optimize performance metrics
	like the mean matching time, defined as
	the average amount of time
	between the arrival of an item
	and its matching with another item.
	
	Two variants of the stochastic matching model
	have been studied in the literature.
	The stochastic \emph{bipartite} matching model
	was introduced in~\cite{CKW09}
	and further studied
	in~\cite{AW12,BGM13,ABMW17,AKRW18}.
	As the name suggests,
	this model assumes that
	the compatibility graph is bipartite.
	One can verify that such a model
	is always unstable if items arrive
	one by one at random instants
	(as the corresponding Markov process is
	either transient or null recurrent).
	Stability in this model is made possible by imposing
	that items always arrive in pairs,
	so that the classes of items in a pair
	belong to the two parts of the graph.
	The stochastic \emph{non-bipartite} matching model,
	which we consider in this paper,
	was introduced in \cite{MM16}
	and further studied
	in \cite{CDFB20,MBM21,BMMR20}.
	In this model,
	items arrive one at a time,
	and stability is made possible
	by imposing the presence of odd cycles
	in the compatibility graph.
	Mairesse and Moyal~\cite{MM16} consider
	matching policies
	that are \emph{admissible} in the sense that
	the matching decision is
	based solely on the class of the incoming item
	and the sequence, in order of arrival,
	of classes of unmatched items.
	This work characterizes
	the maximum stability region, that is,
	the set of vectors of per-class arrival rates
	under which there exists an admissible policy that makes the system stable.
	The focus of~\cite{MBM21,CDFB20,BMMR20}
	is on an admissible policy
	called \emph{first-come-first-matched};
	this is also our focus.
	As the name suggests, each incoming item
	is matched with the compatible item
	that has been waiting the longest, if any.
	In addition to being perceived as fair,
	this policy has the advantage of
	maximizing the stability region
	and leading to tractable results,
	as we will develop in this paper.
	The rationale behind this policy,
	which gives an intuition for its maximum stability,
	is as follows:
	if an item has been waiting longer than another,
	it is likely that this item
	is compatible with classes that are relatively scarcer,
	so that matching this item if an opportunity arises
	is a good heuristic to preserve stability.
	It is shown in~\cite{MBM21} that
	this matching policy is
	indeed maximally stable
	and leads to a product-form
	stationary distribution.
	This work also derives
	a closed-form expression
	for the normalization constant
	of this stationary distribution.
	A similar closed-form expression
	for the mean matching time
	is proposed in \cite{CDFB20},
	which applies this result to study
	the impact of the compatibility graph on performance.
	The authors observe that adding edges
	sometimes has a negative impact on performance,
	which is akin to Braess's paradox in road networks.
	Lastly, the recent work \cite{BMMR20}
	adapts some of the above results
	to a variant of the model
	in which the items of a given class
	can also be matched with one another.
	
	Several recent papers~\cite{AW12,ABMW17,AKRW18,GR20} have
	pointed out the similarity between
	the stationary distribution
	of stochastic matching models
	under this first-come-first-matched policy
	and that of order-independent queues~\cite{BKK95},
	a product-form queueing model
	that recently gained momentum
	in the queueing-theory community.
	Our first main contribution
	consists of formally proving
	that a stochastic non-bipartite matching model
	is an order-independent \emph{loss} queue,
	a loss variant of order-independent queues
	introduced in~\cite{BK96}.
	This allows us to adapt
	known results on order-independent queues
	to stochastic non-bipartite matching models,
	and in particular to provide simpler proofs
	for the product-form and stability results
	derived in \cite{MBM21}.
	The relation with order-independent loss queues
	also allows us to extend these results
	to several variants of the model,
	some of which were already considered in the literature.
	Our second main contribution
	consists of closed-form expressions
	for several performance metrics
	like the waiting probability
	and the mean matching time.
	Some of these expressions
	can be seen as recursive variants
	of those derived in \cite{MBM21,CDFB20},
	and are adapted from the works \cite{SV15,C19,CBL21}
	which focus on order-independent queues
	and related queueing models.
	The recursive form of these expressions
	allows us to avoid duplicate calculations
	by using dynamic programming,
	which significantly reduces
	the complexity of the calculations.
	In addition to their numerical applications,
	these formulas allow us to gain intuition
	on the impact of the parameters on performance,
	and in particular to formalize
	what it means to have a \emph{balanced} system.
	Along the same lines,
	our third and last contribution
	is a performance analysis
	in a scaling regime, called \emph{heavy traffic}
	by analogy with the scaling regime
	of the same name in queueing theory,
	in which the vector of per-class arrival rates
	tends to a boundary of the stability region.
	This result can be seen as
	a generalization of the
	heavy-traffic result of~\cite{CBL21},
	which focuses on order-independent queues,
	although the conclusion is different
	and illustrates the subtle difference
	between stochastic matching models
	and conventional queueing models.
	
	The paper is organized as follows.
	Section~\ref{sec:model}
	describes a continuous-time variant
	of the stochastic non-bipartite matching model
	proposed in~\cite{MM16}
	and introduces useful notation.
	In Section~\ref{subsec:oi},
	we prove that
	this stochastic matching model
	is an order-independent loss queue.
	This result is applied
	in the rest of Section~\ref{sec:oi}
	and in Section~\ref{sec:perf}
	to translate known results
	for order-independent queues
	to the stochastic matching model
	and to generalize them.
	In particular, Sections~\ref{subsec:steady}
	and \ref{subsec:stability}
	give simpler proofs of
	the product-form result
	and stability condition of~\cite{MBM21},
	while in Section~\ref{sec:perf}
	we derive closed-form expressions
	for several performance metrics.
	These results are applied in Section~\ref{sec:app}
	to gain insight into the impact
	of parameters on performance.
	More specifically, we propose several heuristics
	to minimize the mean matching time
	and study what happens when
	part of the system becomes unstable.
	Numerical results in Section~\ref{sec:num}
	allow us to gain further insight
	into the dynamics
	of stochastic matching models.
	Section~\ref{sec:ccl} concludes the paper.

	\section{Stochastic non-bipartite matching model} \label{sec:model}
	
	We first define a continuous-time variant
	of the stochastic non-bipartite matching model
	introduced in \cite{MM16}.
	
	\subsection{Continuous-time model} \label{subsec:model}
	
	Let $\V = \{1, 2, \ldots, N\}$
	denote a finite set of item classes
	and consider an undirected
	connected non-bipartite
	graph without loops
	with vertex set $\V$.
	This graph will be called the
	\emph{compatibility graph} of the matching model
	because it will eventually describe
	the matching compatibilities between items.
	An example
	is shown in \figurename~\ref{fig:toy-graph}.
	For each $i, j \in \V$,
	we write $i \link j$ if there is an edge
	between classes~$i$ and $j$
	in the compatibility graph
	and $i \nlink j$ otherwise.
	For each $i \in \V$,
	we also let $\E_i \subseteq \V$
	denote the set of neighbors of class~$i$
	in the compatibility graph.
	With a slight abuse of notation,
	for each set
	$\A \subseteq \V$ of classes,
	we also let
	$\E(\A) = \bigcup_{i \in \A} \E_i$
	denote the set of classes
	that are neighbors
	of at least one class in $\A$.
	Our assumption
	that the compatibility graph is non-bipartite
	is not necessary for the soundness of the model,
	but we will see in Section~\ref{subsec:stability}
	that it is required for stability.
	
	For each $i \in \V$,
	class-$i$ items enter the system
	according to an independent
	Poisson process with rate~$\alpha_i > 0$.
	As we will see in Section~\ref{subsec:steady},
	we can (and will) assume without loss of generality
	that the vector $\alpha = (\alpha_1, \alpha_2, \ldots, \alpha_N)$
	is normalized
	in the sense that $\sum_{i \in \V} \alpha_i = 1$.
	For each subset $\A \subseteq \V$ of classes,
	we also let $\alpha(\A) = \sum_{i \in \A} \alpha_i$
	denote the overall arrival rate
	of the classes in $\A$.
	All unmatched items are queued
	in a buffer in their arrival order.
	In \figurename~\ref{fig:toy-state}
	for instance,
	unmatched items are ordered
	from the oldest on the left
	to the newest on the right,
	and the number written on each item
	represents its class.
	Each incoming item is matched with
	an item of a compatible class
	present in the buffer, if any,
	in which case both items
	disappear immediately;
	otherwise, the incoming item
	is added at the end of the buffer.
	If an incoming item
	finds several compatible items
	in the buffer,
	it is matched with the compatible item
	that has been in the buffer the longest.
	This matching policy is called
	\emph{first-come-first-matched}
	and was studied in \cite{MM16,MBM21,CDFB20,BMMR20}.
	Note that this matching policy
	can be applied
	with a more decentralized approach
	in which each item is only aware
	of the arrival order
	of its \emph{compatible} items.
	Such a decentralized approach
	leads to the same dynamics and therefore
	to the same performance measures.
	
	\begin{figure}[ht]
		\centering
		\begin{tikzpicture}
			\node[fcfs=7,
			rectangle split part fill
			= {orange!60, orange!60, green!60, orange!60, green!60, white},
			] (queue) {};
			\fill[white] ([xshift=\pgflinewidth+1pt,
			yshift=-\pgflinewidth+1pt]queue.north east)
			rectangle ([xshift=-\pgflinewidth-4pt,
			yshift=\pgflinewidth-1pt]queue.south east);
			\fill[white] ([xshift=\pgflinewidth+1pt,
			yshift=-\pgflinewidth-.01pt]queue.north east)
			rectangle ([xshift=-\pgflinewidth-5pt,
			yshift=\pgflinewidth+.01pt]queue.south east);
			\node[anchor=north, yshift=.1cm]
			at ($(queue.south)-(.1cm,0)$)
			{\strut Buffer in state $c = (4,4,1,4,1)$};
			
			\node at
			($(queue.one south)!.5!(queue.one north)$) {4};
			\node at
			($(queue.two south)!.5!(queue.two north)$) {4};
			\node at
			($(queue.three south)!.5!(queue.three north)$) {1};
			\node at
			($(queue.four south)!.5!(queue.four north)$) {4};
			\node at
			($(queue.five south)!.5!(queue.five north)$) {1};
			
			\node[class, fill=red!15, anchor=south east] (new)
			at ($(queue.north west)+(0,.6cm)$) {2};
			\node[anchor=south, yshift=-.1cm]
			at (new.north) {\strut New item};
			\draw[->] ($(new.west)-(.6cm,0)$)
			-- ($(new.west)-(.05cm,0)$);
			\draw[->] ($(new.south)-(0,.05cm)$)
			|- ($(queue.three north)+(0,.3cm)$)
			-- ($(queue.three north)+(0,.05cm)$);
			
			\draw[-] ($(queue.three)+(.6cm,.6cm)$)
			-- ($(queue.three)$);
			\draw[-] ($(queue.three)+(.6cm,0)$)
			-- ($(queue.three)+(0,.6cm)$);
			
			\node at ($(new.west)-(2.4cm,0)$) {};
			\node at ($(queue.east)+(2cm,0)$) {};
		\end{tikzpicture}
		\caption{%
			Unmatched items are ordered in the buffer
				from the oldest item on the left
				to the newest on the right.
				In particular, in the state
				$c = (4, 4, 1, 4, 1)$ depicted in the picture,
				the oldest item is of class~$4$
				and the newest is of class~$1$.
			With the compatibility graph
			of~\figurename~\ref{fig:toy-graph},
			if a class-2 item arrives
			while the system is in this state,
			this item scans the buffer
			from left to right until it finds
			a compatible item, and is therefore matched
			with the oldest class-1 item.
			The new system state after this transition
			is~$d = (4,4,4,1)$.
			\label{fig:toy-state}
		}
	\end{figure}
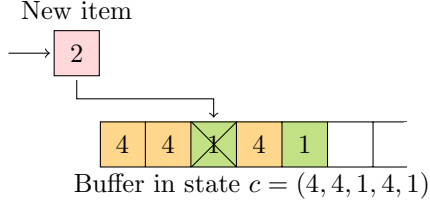
	
	\subsection{Markov process} \label{subsec:ctmc}
	
	We now define
	a Markov process
	that describes the evolution
	of the continuous-time stochastic
	matching model that we have just introduced.
	
	\paragraph{Markovian state}
	
	We describe the system state
	with the sequence of unmatched item classes
	present in the buffer.
	More specifically, we consider the sequence
	$c = (c_1, \ldots, c_n)$,
	where $n$ is the number of items in the buffer
	and $c_p$ is the class of the $p$-th oldest item,
	for each $p \in \{1, \ldots, n\}$.
	In particular, $c_1$ is the class of the item
	that has been in the buffer the longest.
	\figurename~\ref{fig:toy-state}
	shows a possible buffer state
	associated with the compatibility graph
	of \figurename~\ref{fig:toy-graph},
	with the newest items on the left.
	The corresponding state descriptor
	is $c = (4, 4, 1, 4, 1)$.
	We assume that the state is initially empty,
	with $n = 0$, in which case we write $c = \varnothing$.
	The assumption that each class of items
	arrives according to an independent Poisson process
	guarantees that the stochastic process
	defined by the evolution
	of this state descriptor over time
	has the Markov property.
	When the system is in state $c = (c_1, \ldots, c_n)$
	and a class-$i$ item arrives,
	the new system state is
	$(c_1, \ldots, c_n, i)$
	if $c_p \nlink i$
	for each $p \in \{1, \ldots, n\}$
	and $(c_1, \ldots, c_{q-1}, c_{q+1}, \ldots, c_n)$,
	where
	$q = \min\{p \in \{1, \ldots, n\}: c_p \link i\}$,
	otherwise.
	The overall departure rate from each state
	is $\sum_{i \in \V} \alpha_i = 1$.
	
	\paragraph{State space}
	
	The system state $c = (c_1, \ldots, c_n)$
	belongs to the set $\V^*$
	made of the finite sequences over the alphabet $\V$,
	but the matching policy imposes
	additional restrictions on the form of this sequence.
	More specifically, the buffer
	cannot contain customers of classes $i, j \in \V$
	such that $i \link j$,
	as the newest of the two items would necessarily
	have been matched with the oldest
	(or with an older compatible item)
	upon arrival.
	In \figurename~\ref{fig:toy-state} for instance,
	an incoming class-$2$ item would
	be matched with the oldest class-$1$ item,
	so that items of classes~$1$ and $2$
	cannot be in the buffer at the same time.
	Therefore, in general, the state space
	of the Markov process
	defined by the system state
	is a strict subset of $\V^*$
	which we denote by $\C$.
	To better describe the structure of this state space,
	additional notation is introduced in the next paragraph.
	
	A nonempty set $\I \subseteq \V$
	is called an \emph{independent set}
	(of the compatibility graph)
	if $i \nlink j$ for each $i, j \in \I$.
	In other words,
	the set $\I$ is independent
	if the intersection of this set
	with the set $\E(\I)$
	of its neighbors is empty.
	The set of all independent sets
	is denoted by~$\ind$.
	As an example,
	the set of independent sets
	of the compatibility graph of \figurename~\ref{fig:toy-graph} is
	$\ind = \{\{1\}, \{2\}, \{3\}, \{4\}, \{1, 4\}, \{2, 4\}\}$.
	In general, if a set $\I \subseteq \V$
	is an independent set,
	then any nonempty subset of $\I$
	is also an independent set.
	For convenience,
	we also let $\inde = \ind \cup \{\emptyset\}$
	denote the set of independent sets
	completed with the empty set.
	
	For each $\I \in \inde$, we let
	$\C_\I$ denote the set of words in $\V^*$
	that contain exactly the classes in~$\I$.
	Equivalently, if $\V(c) \subseteq \V$
	denotes the set of item classes
	present in the word $c \in \V^*$,
	we have $c \in \C_\I$
	if and only if $\V(c) = \I$.
	Observe that $\C_\I$ is a strict subset
	of $\I^*$ in general,
	as words in $\I^*$
	need not contain \emph{all} classes in $\I$.
	In the special case where $\I = \emptyset$,
	the set $\C_\I$ contains only
	the empty state $\varnothing$.
	With this notation,
	the state space of the Markov process
	defined in the previous paragraph
	can be partitioned as follows:
	\begin{align} \label{eq:C-partition}
		\C = \bigcup_{\I \in \inde} \C_\I.
	\end{align}
	In particular, the set of item classes
	present in a feasible state
	of the matching model
	is an independent set.
	
	\begin{remark*}
		The stochastic non-bipartite matching model
		of \cite{MM16,MBM21,CDFB20,BMMR20}
		works in the same way
		as the one presented above,
		except that time is slotted and
		there is exactly one arrival during each time slot.
		More specifically,
		the classes of incoming items
		during different time slots are independent
		and identically distributed and,
		for each $i \in \V$,
		$\alpha_i$ is the probability
		that an incoming item is of class~$i$.
		One can verify
		that the (discrete-time) Markov chain associated with
		the state of this
		discrete-time stochastic matching model
		is the jump chain of
		the Markov process defined above.
		Since the departure rate
		from each state in the Markov process
		is $\sum_{i \in \V} \alpha_i = 1$,
		inspection of the balance equations reveals
		that the Markov chain
		and the Markov process
		have the same stationary measures.
		Consequently, the results in
		\cite[Theorem~1]{MBM21},
		derived for the discrete-time stochastic matching model,
		also apply to the continuous-time stochastic matching model
		defined above.
		These results will be recalled
		in Sections~\ref{subsec:steady}
		and~\ref{subsec:stability}.
		For simplicity,
		the continuous-time stochastic non-bipartite matching model
		introduced in Section~\ref{subsec:model}
		will be called the \emph{stochastic matching model}
		in the remainder.
	\end{remark*}

	\section{Product-form queueing model}
	\label{sec:oi}
	
	In Section~\ref{subsec:oi},
	we prove that
	the stochastic matching model
	of Section~\ref{subsec:model}
	is an instance of a queueing model,
	called an \emph{order-independent
		loss queue}~\cite{BK96},
	that is known to have a
	product-form stationary distribution.
	We then identify several extensions
	of the stochastic matching model
	that can also be cast
	as order-independent loss queues.
	In Sections~\ref{subsec:steady}
	and~\ref{subsec:stability},
	we use this observation
	to provide simpler proofs for two results
	from the literature on stochastic matching models.
	The reader who is more interested
	in understanding the long-term performance
	of the stochastic matching model
	can skip Section~\ref{subsec:oi} and move directly to Section~\ref{subsec:steady}.
	
	\subsection{Order-independent loss queue} \label{subsec:oi}
	
	Our definition of
	order-independent loss queues,
	given in Definition~\ref{def:oi} below,
	differs in spirit
	from that of \cite[Section~1]{BK96}
	in that conditions are given
	directly in terms of the associated
	Markov process,
	rather than in terms of the queueing system itself.
	In particular, although
	the name \emph{loss} usually entails rejecting customers,
	here this manifests itself via the fact that
	the state space $\C$ may be a strict subset of $\V^*$.
	This definition happens
	to be more convenient
	to show the connection
	with the stochastic matching model
	of Section~\ref{subsec:model},
	and preserves all the properties
	of the order-independent loss queues
	defined in \cite{BK96}.
	We also make a few restrictions
	(such as imposing constant arrival rates)
	compared to \cite{BK96}
	to avoid introducing superfluous notation.
	
	\begin{definition} \label{def:oi}
		Consider a queueing system
		with a set $\V = \{1, 2, \ldots, N\}$
		of customer classes.
		This queueing system
		is called
		an \emph{order-independent loss queue}
		with state space $\C \subseteq \V^*$,
		arrival-rate vector
		$\lambda = (\lambda_1, \lambda_2, \ldots, \lambda_N)$,
		and departure-rate function $\mu: \C \to \R_+$
		if the following conditions are satisfied:
		\begin{enumerate}[label=(\alph*)]
			\item The evolution over time
			of the sequence
			$c = (c_1, \ldots, c_n) \in \V$
			of classes of present customers,
			ordered by arrival,
			defines a Markov process
			such that:
			\begin{enumerate}[label=(\roman*)]
				\item The state space of this
				Markov process is $\C$.
				\item The transitions from each state
				$(c_1, \ldots, c_n) \in \C$
				are given as follows:
				\begin{itemize}
					\item For each class $i \in \V$
					such that $(c_1, \ldots, c_n, i) \in \C$,
					there is a transition
					from state~$c$
					to state $(c_1, \ldots, c_n, i)$
					with rate $\lambda_i$.
					\item For each position $p \in \{1, \ldots, n\}$
					such that $\mu(c_1, \ldots, c_p)
					> \mu(c_1, \ldots, c_{p-1})$,
					there is a transition
					from state~$c$ to state
					$(c_1, \ldots, c_{p-1}, c_{p+1}, \ldots, c_n)$
					with rate
					$\mu(c_1, \ldots, c_p)
					- \mu(c_1, \ldots, c_{p-1})$
					(with the convention that
					$(c_1, \ldots, c_{p-1}) = \varnothing$
					if $p = 1$).
				\end{itemize}
			\end{enumerate}
			\item The state space $\C$
			satisfies the following conditions:
			\begin{enumerate}[label=(\roman*)]
				\item if $(c_1, \ldots, c_n) \in \C$,
				then $(c_{\sigma(1)}, \ldots, c_{\sigma(n)})
				\in \C$
				for each permutation $\sigma$
				of $\{1, \ldots, n\}$, and
				\item if $(c_1, \ldots, c_n) \in \C \setminus \{\varnothing\}$,
				then $(c_1, \ldots, c_{n-1}) \in \C$.
			\end{enumerate}
			\item The departure-rate function $\mu$
			satisfies the following conditions:
			\begin{enumerate}[label=(\roman*)]
				\item $\mu(c_1, \ldots, c_n)
				= \mu(c_{\sigma(1)}, \ldots, c_{\sigma(n)})$
				for each state $(c_1, \ldots, c_n) \in \C$
				and permutation $\sigma$ on $\{1, \ldots, n\}$,
				\item $\mu(c_1, \ldots, c_n, i)
				\ge \mu(c_1, \ldots, c_n)$
				for each state $(c_1, \ldots, c_n) \in \C$
				and class $i \in \V$
				such that $(c_1, \ldots, c_n, i) \in \C$,
				\item $\mu(\varnothing) = 0$
				and $\mu(c_1, \ldots, c_n) > 0$
				for each $(c_1, \ldots, c_n)
				\in \C \setminus \varnothing$.
			\end{enumerate}
		\end{enumerate}
	\end{definition}
	
	Proposition~\ref{prop:oi} below
	is our first main contribution.
	It proves that
	the stochastic matching model
	is an order-independent loss queue.
	Note the form of the state space~$\C$
	defined by~\eqref{eq:C-partition},
	which differs from classic applications
	of \emph{loss} queueing models
	in that the number of items of each class
	can be arbitrarily large
	when it is not zero.
	Once Proposition~\ref{prop:oi} is verified,
	applying \cite[Theorem~1]{BK96}
	suffices to prove that
	the stationary measures
	of the Markov process
	defined in Section~\ref{subsec:ctmc}
	have the product form derived
	in~\cite[Theorem~1]{MBM21}.
	Both the result and its proof
	will be recalled
	in Section~\ref{subsec:steady}
	for completeness.
	
	\begin{proposition} \label{prop:oi}
		The stochastic matching model
		defined in Section~\ref{subsec:model}
		is an order-independent loss queue,
		in the sense of Definition~\ref{def:oi},
		with the state space $\C$
		defined by~\eqref{eq:C-partition},
		the arrival-rate vector
		$\lambda = \alpha = (\alpha_1, \alpha_2, \ldots, \alpha_N)$,
		and the departure-rate function
		$\mu: \C \to \R_+$ defined by
		\begin{align*}
			\mu(c)
			= \alpha(\E(\V(c)))
			\quad c \in \C.
		\end{align*}
	\end{proposition}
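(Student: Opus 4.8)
The plan is to verify directly, one by one, the conditions listed in Definition~\ref{def:oi} for the choice $\lambda = \alpha$ and $\mu(c) = \alpha(\E(\V(c)))$. Conditions (a)(i), (b), and (c) will turn out to be short consequences of structural facts already recorded in Section~\ref{sec:model}, so the bulk of the work---and the only place where the first-come-first-matched rule genuinely enters---is the transition description (a)(ii). I would therefore settle (a)(ii) first and dispatch the remaining conditions afterwards.

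For (a)(ii), fix a feasible state $c = (c_1, \ldots, c_n) \in \C$, so that $\V(c)$ is an independent set. The starting point is the telescoping identity obtained from $\E(\{c_1, \ldots, c_q\}) = \E(\{c_1, \ldots, c_{q-1}\}) \cup \E_{c_q}$, which gives, for each $q \in \{1, \ldots, n\}$,
\begin{align*}
	\mu(c_1, \ldots, c_q) - \mu(c_1, \ldots, c_{q-1})
	= \alpha\bigl(\E_{c_q} \setminus \E(\{c_1, \ldots, c_{q-1}\})\bigr).
\end{align*}
I would then argue that, in the matching model, an incoming class-$i$ item is matched with the item in position $q = \min\{p : c_p \link i\}$ exactly when $i \in \E_{c_q} \setminus \E(\{c_1, \ldots, c_{q-1}\})$, and that the state it leads to, namely $(c_1, \ldots, c_{q-1}, c_{q+1}, \ldots, c_n)$, coincides with the one prescribed by Definition~\ref{def:oi}. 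Summing $\alpha_i$ over all such classes $i$ recovers precisely the rate $\mu(c_1, \ldots, c_q) - \mu(c_1, \ldots, c_{q-1})$; and since every $\alpha_i > 0$, this rate is strictly positive if and only if $\E_{c_q} \setminus \E(\{c_1, \ldots, c_{q-1}\})$ is nonempty, i.e. if and only if $\mu(c_1, \ldots, c_q) > \mu(c_1, \ldots, c_{q-1})$. Symmetrically, an incoming class-$i$ item is appended to the buffer exactly when $c_p \nlink i$ for every $p$, that is when $i \notin \E(\V(c))$, and I would check that this last condition is equivalent to $(c_1, \ldots, c_n, i) \in \C$, because $\V(c)$ is already independent; the rate of this transition is $\alpha_i = \lambda_i$, as required. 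This identifies the Markov process of Section~\ref{subsec:ctmc} with the one described in Definition~\ref{def:oi}(a), its state space being $\C$ by~\eqref{eq:C-partition}, which also settles (a)(i).

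The remaining conditions are quick. For (b)(i) I would use that each $\C_\I$ is by definition closed under permutation of its words, hence so is $\C = \bigcup_{\I \in \inde} \C_\I$; for (b)(ii) I would use the fact recalled in Section~\ref{sec:model} that every nonempty subset of an independent set is independent, so $\V(c_1, \ldots, c_{n-1}) \subseteq \V(c_1, \ldots, c_n)$ lies in $\inde$ and thus $(c_1, \ldots, c_{n-1}) \in \C$. For (c), all three items follow from the fact that $\mu(c) = \alpha(\E(\V(c)))$ depends on $c$ only through $\V(c)$: permutation-invariance (c)(i) is then immediate; monotonicity (c)(ii) follows from $\V(c) \subseteq \V(c) \cup \{i\}$, hence $\E(\V(c)) \subseteq \E(\V(c) \cup \{i\})$, together with $\alpha_i \ge 0$; and for (c)(iii), $\E(\emptyset) = \emptyset$ yields $\mu(\varnothing) = 0$, while for nonempty $\V(c)$ the compatibility graph, being connected and non-bipartite, contains at least one edge, so every vertex has a neighbor and $\E_i \neq \emptyset$ for any $i \in \V(c)$; thus $\E(\V(c)) \neq \emptyset$ and $\mu(c) = \alpha(\E(\V(c))) > 0$ since every $\alpha_i > 0$.

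I expect the main obstacle to be purely a matter of careful bookkeeping in (a)(ii): one must check both that the set of arriving classes splits cleanly into those that are appended (the $i \notin \E(\V(c))$) and those that trigger a match at a uniquely determined position $q$, and that aggregating the per-class matching rates at a fixed position $q$ reproduces the ``layered'' departure rate $\mu(c_1, \ldots, c_q) - \mu(c_1, \ldots, c_{q-1})$ of the order-independent loss queue without any double counting. Once this correspondence is written out in detail, Proposition~\ref{prop:oi} follows.
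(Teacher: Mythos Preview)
Your proposal is correct and follows essentially the same approach as the paper, which in fact only sketches the proof by recording the two key observations you develop in detail for (a)(ii)---namely that $(c_1,\ldots,c_n,i)\in\C$ iff $i\notin\E(\V(c))$, and the telescoping identity $\mu(c_1,\ldots,c_p)-\mu(c_1,\ldots,c_{p-1})=\alpha(\E_{c_p}\setminus\E(\V(c_1,\ldots,c_{p-1})))$---while explicitly leaving the verification of the remaining conditions to the reader. Your write-up is thus a faithful and more complete execution of exactly the argument the paper outlines.
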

	
	\begin{proof}
		The details of checking
		the conditions of Definition~\ref{def:oi}
		are left to the reader.
		We simply make two remarks
		regarding the transitions
		of the Markov process
		defined in Section~\ref{subsec:ctmc}.
		First, for each state
		$c = (c_1, \ldots, c_n) \in \C$
		and class $i \in \V$,
		we have $(c_1, \ldots, c_n, i) \in \C$
		if and only if $i \notin \E(\V(c))$,
		that is, if and only if
		class~$i$ cannot be matched
		with any class present in state~$c$.
		Second, if the function $\mu: \C \to \R_+$
		is as defined in the proposition,
		then for each state $(c_1, \ldots, c_n) \in \C$
		and position $p \in \{1, \ldots, n\}$,
		the difference
		\begin{align*}
			\mu(c_1, \ldots, c_p)
			- \mu(c_1, \ldots, c_{p-1})
			= \alpha\left(
			\E_{c_p} \setminus \E(\V(c_1, \ldots, c_{p-1}))
			\right)
		\end{align*}
		gives the arrival rate of the classes
		that can be matched with class~$c_p$
		but not with the classes
		present in $(c_1, \ldots, c_{p-1})$.
	\end{proof}
	
	\begin{remark*}
		The versatility of the order-independent framework
		allows us to generalize
		the stochastic matching model
		introduced in Section~\ref{subsec:model}
		in several directions
		while preserving its product-form nature.
		For instance,
		we again obtain
		an order-independent loss queue
		if items of a given class can
		be matched with one another.
		The main difference with the original model
		is that the state space is reduced,
		as the buffer can
		contain at most one customer
		of such a class at a time.
		We note that this extension was considered
		in the recent work~\cite{BMMR20},
		and that the proofs of Theorems~\ref{theo:pic}
		and \ref{theo:stability}
		in Sections~\ref{subsec:steady}
		and~\ref{subsec:stability}
		can be adapted to provide a simpler proof of
		\cite[Theorem~1]{BMMR20}.
		Another extension,
		which again leads to
		an order-independent loss queue,
		consists of
		describing compatibilities with a directed graph
		(instead of an undirected graph),
		so that there is an edge
		from class~$i$ to class~$j$
		if incoming class-$i$ items can be matched
		with class-$j$ items
		(which does not imply that incoming class-$j$ items
		can be matched with class-$i$ items).
		Yet another extension consists of
		allowing for item abandonment~\cite{TFT13},
		meaning that an item may leave
		the system without being matched.
		The stability region
		and product-form stationary distribution
		of these extensions
		can be characterized by following a similar approach
		as in the proofs of Theorems~\ref{theo:pic}
		and~\ref{theo:stability},
		or by applying known results
		on order-independent queues
		and quasi-reversible queues,
		a more general class of queueing models
		studied in \cite[Chapter~3]{kelly}
		and~\cite{C11}.
		A last extension consists
		of imposing an upper bound on the number
		of unmatched items (either of each class or in total),
		or any other rejection rule
		that again leads to an order-independent loss queue.
		In this case, the stationary distribution
		is the restriction of that recalled in Theorem~\ref{theo:pic}
		to the set of reachable states
		(after renormalization),
		and no stability condition is needed.
	\end{remark*}
	
	\begin{remark*}
		The usefulness of the equivalence
		between stochastic matching models
		and conventional queueing models
		is particularly evident
		under the first-come-first-matched policy,
		as the corresponding queueing model is known
		to have a product-form stationary distribution.
		However, this equivalence may also prove useful
		to transpose existing scheduling policies
		into matching policies,
		as was done in~\cite{BGM13,MM16,BMMR20},
		or to adapt exact or approximate analysis methods.
	\end{remark*}
	
	\subsection{Product-form stationary distribution} \label{subsec:steady}
	
	The Markov process
	defined in Section~\ref{subsec:ctmc}
	is irreducible.
	Indeed, for each state
	$c = (c_1, \ldots, c_n) \in \C$
	and each state $d = (d_1, \ldots, d_m) \in \C$,
	we can first go
	from state $c$ to state $\varnothing$
	via $n$ transitions
	corresponding to arrivals of items
	compatible with the items in state $c$,
	and then from state~$\varnothing$ to state~$d$
	via $m$ transitions
	corresponding to arrivals of the items
	present in state~$d$.
	Note that the first $n$ transitions
	occur with a positive probability
	because the compatibility graph
	contains no isolated node;
	additionally, the latter $m$ transitions
	indeed lead to state~$d$
	because the set of classes in $d$
	is an independent set by definition of $\C$.
	
	Theorem~\ref{theo:pic} below
	recalls the form of the stationary measures
	of the Markov process
	defined in Section~\ref{subsec:ctmc}.
	As observed before,
	this theorem is analogous
	to \cite[Theorem~1]{MBM21},
	which was stated for the discrete-time
	variant of the stochastic matching model.
	Thanks to the relation with
	order-independent loss queues
	proved
	in Proposition~\ref{prop:oi},
	we propose a simpler proof
	that consists of explicitly writing
	the (partial) balance equations
	of the Markov process.
	This proof, which is a special case
	of that of \cite[Theorem~1]{BK96},
	is sketched below for completeness.
	
	\begin{theorem} \label{theo:pic}
		The stationary measures of
		the Markov process
		associated with the system state
		are of the form
		\begin{align} \label{eq:pic-def}
			\pi(c_1, \ldots, c_n)
			= \pi(\varnothing)
			\prod_{p = 1}^n
			\frac{\alpha_{c_p}}
			{\alpha(\E(\V(c_1, \ldots, c_p)))},
			\quad (c_1, \ldots, c_n) \in \C,
		\end{align}
		where $\pi(\varnothing)$
		is a normalization constant.
		The system is stable, in the sense that
		this Markov process is ergodic, if and only if
		\begin{align} \label{eq:stability-complicated}
			\sum_{(c_1, \ldots, c_n) \in \C}
			\prod_{p = 1}^n
			\frac{\alpha_{c_p}}
			{\alpha(\E(\V(c_1, \ldots, c_p)))}
			< \infty,
		\end{align}
		in which case the stationary distribution
		of the Markov process is given by~\eqref{eq:pic-def}
		with the normalization constant
		\begin{align} \label{eq:normalization}
			\pi(\varnothing)
			= \left(
			\sum_{(c_1, \ldots, c_n) \in \C}
			\prod_{p = 1}^n
			\frac{\alpha_{c_p}}
			{\alpha(\E(\V(c_1, \ldots, c_p)))}
			\right)^{-1}.
		\end{align}
	\end{theorem}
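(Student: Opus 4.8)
The plan is to verify directly that the measure $\pi$ given by~\eqref{eq:pic-def} is a stationary measure of the Markov process of Section~\ref{subsec:ctmc}, and then to invoke standard Markov-chain theory to obtain the stability criterion and the normalization constant. Since Proposition~\ref{prop:oi} identifies the stochastic matching model with an order-independent loss queue, the whole statement is a specialization of~\cite[Theorem~1]{BK96}, and the argument below is the corresponding instance of that proof. First I would check that $\pi$ is well defined on $\C$: by condition~(b)(ii) of Definition~\ref{def:oi}, every prefix $(c_1, \ldots, c_p)$ of a state $(c_1, \ldots, c_n) \in \C$ again belongs to $\C$, so condition~(c)(iii) guarantees that the denominators $\alpha(\E(\V(c_1, \ldots, c_p))) = \mu(c_1, \ldots, c_p)$ are strictly positive; in particular $\pi$ obeys the recursion $\pi(c_1, \ldots, c_n) = \pi(c_1, \ldots, c_{n-1})\,\alpha_{c_n}/\mu(c_1, \ldots, c_n)$.

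Next I would split the global balance equation at a state $c = (c_1, \ldots, c_n) \in \C$ into two partial balance equations. Using $\mu(\varnothing) = 0$ and the telescoping identity $\sum_{p=1}^n \big(\mu(c_1, \ldots, c_p) - \mu(c_1, \ldots, c_{p-1})\big) = \mu(c)$, the total rate out of $c$ equals $\sum_{i \,:\, (c_1, \ldots, c_n, i) \in \C} \alpha_i + \mu(c)$, which moreover equals $\alpha(\V) = 1$ because $(c_1, \ldots, c_n, i) \in \C$ precisely when $i \notin \E(\V(c))$, by the first remark in the proof of Proposition~\ref{prop:oi}. The first partial balance equation, $\pi(c)\,\mu(c) = \pi(c_1, \ldots, c_{n-1})\,\alpha_{c_n}$ for $n \ge 1$ (and $0 = 0$ for $n = 0$), follows at once from the recursion for $\pi$; it matches the flux leaving $c$ through some matching with the flux entering $c$ through the arrival that created its youngest item. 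The remaining, and essential, part is the second partial balance equation,
\begin{align*}
  \pi(c_1, \ldots, c_n) \sum_{i \,:\, (c_1, \ldots, c_n, i) \in \C} \alpha_i
  &= \sum_{p = 0}^{n}\ \sum_{\substack{j \in \V \,:\\ (c_1, \ldots, c_p, j, c_{p+1}, \ldots, c_n) \in \C}}
    \pi(c_1, \ldots, c_p, j, c_{p+1}, \ldots, c_n) \\
  &\qquad \times \big(\mu(c_1, \ldots, c_p, j) - \mu(c_1, \ldots, c_p)\big),
\end{align*}
whose right-hand side collects the flux entering $c$ through a matching that removes the item inserted at position~$p+1$. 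Adding the two partial balance equations recovers the global balance equation at $c$; summing over $c \in \C$ then shows that $\pi$ is stationary.

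The main obstacle is this second partial balance equation, which is the technical heart of the order-independent framework. I would establish it by induction on $n$, using two ingredients from earlier in the paper: the order-independence of $\mu$ (condition~(c)(i) of Definition~\ref{def:oi}), which allows the items of each inserted state $(c_1, \ldots, c_p, j, c_{p+1}, \ldots, c_n)$ to be permuted so that its $\pi$-value can be rewritten in terms of $\pi(c)$ via the recursion above; and the explicit form $\mu(c) = \alpha(\E(\V(c)))$, together with the increment identity $\mu(c_1, \ldots, c_p) - \mu(c_1, \ldots, c_{p-1}) = \alpha\big(\E_{c_p} \setminus \E(\V(c_1, \ldots, c_{p-1}))\big)$ derived in the proof of Proposition~\ref{prop:oi}, which makes the telescoping on the right-hand side explicit. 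In matching terms, the classes $j$ that can be inserted and later matched away are exactly those not yet blocked by $\V(c_1, \ldots, c_p)$, and aggregating their arrival rates over the insertion positions reconstructs the arrival rate $\sum_{i \,:\, (c_1, \ldots, c_n, i) \in \C} \alpha_i$ on the left-hand side.

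Finally I would deduce ergodicity and the normalization constant. The Markov process is irreducible (as shown just before the statement) and non-explosive, since its total transition rate out of every state equals $\alpha(\V) = 1$; hence \cite[Theorem~1]{BK96} (or standard arguments) gives that~\eqref{eq:pic-def} describes all its stationary measures. If the series in~\eqref{eq:stability-complicated} is finite, normalizing $\pi$ to a probability distribution produces a stationary distribution of an irreducible non-explosive process, which is therefore positive recurrent, i.e.\ ergodic, and imposing $\sum_{c \in \C} \pi(c) = 1$ forces $\pi(\varnothing)$ to be~\eqref{eq:normalization}. If instead the series diverges, then no scalar multiple of $\pi$ is a finite measure; since an ergodic process would be recurrent and hence possess a stationary measure that is unique up to a multiplicative constant and finite, such a multiple would be $\pi$ itself, a contradiction, so the process cannot be ergodic. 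This proves the equivalence between stability and~\eqref{eq:stability-complicated} and completes the argument.
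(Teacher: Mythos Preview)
Your proposal is correct and follows essentially the same route as the paper: split global balance into the two partial balance equations, verify the first directly from the recursion $\pi(c)\mu(c)=\pi(c_1,\ldots,c_{n-1})\alpha_{c_n}$, and establish the second by induction on the buffer length using order independence, then conclude via irreducibility and standard ergodicity arguments. The only cosmetic difference is that the paper states the second partial balance equation per class $i\in\V\setminus\E(\V(c))$ (matching the flow $\pi(c)\alpha_i$ with the inflow from states having an extra class-$i$ item), whereas you write the aggregate version summed over all such classes; the paper's per-class form is slightly finer and makes the induction a bit cleaner, but both suffice for global balance.
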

	
	\begin{proof}[Sketch of proof]
		We will prove that
		any measure $\pi$ of the form~\eqref{eq:pic-def}
		satisfies the following
		partial balance equations
		in each state~$c = (c_1, \ldots, c_n) \in \C$:
		\begin{itemize}
			\item Equalize the flow out of state~$c$
			due to the arrival of an item
			that can be matched with a present item
			and the flow into state~$c$
			due to the arrival of an item
			that cannot be matched with a present item
			(if $c \neq \varnothing$):
			\begin{align} \label{eq:balance1}
				\pi(c) \alpha(\E(\V(c)))
				= \pi(c_1, \ldots, c_{n-1}) \alpha_{c_n}.
			\end{align}
			\item Equalize the flow out of state~$c$
			due to the arrival of an item
			of a class $i \in \V \setminus \E(\V(c))$
			that cannot be matched with a present item
			and the flow into state~$c$
			due to the departure of an item of this class:
			\begin{align} \label{eq:balance2}
				\pi(c) \alpha_i
				= \sum_{p = 1}^{n+1}
				\pi(c_1, \ldots, c_{p-1}, i, c_p, \ldots, c_n)
				\alpha(
				\E_i \setminus
				\E(\V(c_1, \ldots, c_{p-1}))
				),
			\end{align}
			with the convention that
			$(c_1, \ldots, c_{p-1}) = \varnothing$
			if $p = 1$
			and $(c_p, \ldots, c_n) = \varnothing$
			if $p = n + 1$.
			Observe that,
			for each $i \in \V \setminus \E(\V(c))$
			and $p \in \{1, \ldots, n+1\}$,
			state
			$(c_1, \ldots, c_{p-1}, i, c_p, \ldots, c_n)$
			belongs to $\C$ because
			the set of item classes present in this state,
			$\V(c) \cup \{i\}$,
			is an independent set.
		\end{itemize}
		Proving that a measure $\pi$
		satisfies the partial balance
		equations~\eqref{eq:balance1}
		and \eqref{eq:balance2}
		is sufficient to prove that
		$\pi$ is a stationary measure of
		the Markov process defined by the system state,
		as summing these partial balance equations
		yields the balance equations
		of the Markov process.
		We can easily verify that
		any measure $\pi$ of the form~\eqref{eq:pic-def}
		satisfies~\eqref{eq:balance1}.
		That this measure also satisfies~\eqref{eq:balance2}
		can be proved by induction
		over the buffer length~$n$,
		in a similar way as in \cite[Theorem~1]{BKK95}.
		
		The stability condition~\eqref{eq:stability-complicated}
		means that there exists a stationary measure
		whose sum is finite,
		which is indeed necessary and sufficient
		for ergodicity of the Markov process.
		Lastly, the constant $\pi(\varnothing)$
		given by~\eqref{eq:normalization}
		is chosen such that the stationary distribution sums to one.
	\end{proof}
	
	Multiplying all arrival rates
	$\alpha_i$ for $i \in \V$
	by the same positive constant
	does not modify the form of
	the stationary measures~\eqref{eq:pic-def}.
	Indeed, this amounts to rescaling time
	without modifying the relative importance
	of the states.
	In particular, as already stated
	in Section~\ref{subsec:model}, we can assume
	without loss of generality
	that $\sum_{i \in \V} \alpha_i = 1$,
	so that $\alpha_i$ is also
	the fraction of arrivals
	that are of class~$i$.
	In the remainder,
	it will also be more convenient to define
	the stationary measures~\eqref{eq:pic-def}
	with the following recursion:
	\begin{align} \label{eq:pic}
		\pi(c_1, \ldots, c_n)
		= \frac{\alpha_{c_n}}
		{\alpha(\E(\V(c_1, \ldots, c_n)))}
		\pi(c_1, \ldots, c_{n-1}),
		\quad (c_1, \ldots, c_n)
		\in \C \setminus \{\varnothing\}.
	\end{align}
	
	\subsection{Stability condition} \label{subsec:stability}
	
	We now recall the stability condition
	derived in~\cite[Theorem 1]{MBM21}
	and again provide a simpler proof based on
	the relation with order-independent loss queues.
	
	\begin{theorem} \label{theo:stability}
		The stochastic matching model
		is stable,
		in the sense that the
		associated Markov process is ergodic,
		if and only if
		\begin{align} \label{eq:stability}
			\alpha(\I) < \alpha(\E(\I)),
			\quad \I \in \ind.
		\end{align}
	\end{theorem}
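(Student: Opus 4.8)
The plan is to build on Theorem~\ref{theo:pic}, which already reduces stability to the finiteness of the series in~\eqref{eq:stability-complicated}; it therefore suffices to prove that~\eqref{eq:stability-complicated} holds if and only if~\eqref{eq:stability} holds. Using the partition~\eqref{eq:C-partition} and writing $|c|$ for the length of a word $c$, the series in~\eqref{eq:stability-complicated} equals $\sum_{\I \in \inde} Z_\I$, where $Z_\I := \sum_{c \in \C_\I} \prod_{p=1}^{|c|} \frac{\alpha_{c_p}}{\alpha(\E(\V(c_1, \ldots, c_p)))}$ and $Z_\emptyset = 1$ by the empty-product convention. Since the set $\inde$ of independent sets is finite, the whole problem reduces to showing that $Z_\I < \infty$ for every $\I \in \ind$ if and only if~\eqref{eq:stability} holds.

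The main tool is a factorization of $\C_\I$ that exhibits $Z_\I$ as a constant times a geometric series. Every word $c \in \C_\I$ decomposes uniquely as $c = b\,w$, where $b = (c_1, \ldots, c_m)$ is the shortest prefix with $\V(b) = \I$, that is, $m = \min\{p : \V(c_1, \ldots, c_p) = \I\}$, and $w$ is an arbitrary word over the alphabet $\I$; conversely, concatenating such a prefix with any word over $\I$ yields a word whose class set is still $\I$, hence an element of $\C_\I \subseteq \C$ (here we use that $\I$ is independent). For positions $p > m$ one has $\V(c_1, \ldots, c_p) = \I$, so the corresponding factor equals $\alpha_{c_p}/\alpha(\E(\I))$; summing over $w$ contributes $\sum_{\ell \ge 0}\big(\alpha(\I)/\alpha(\E(\I))\big)^{\ell}$ since $\sum_{w \in \I^{\ell}}\prod_p \alpha_{w_p} = \alpha(\I)^{\ell}$. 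Denoting by $W_\I$ the sum of $\prod_{p=1}^{m}\frac{\alpha_{c_p}}{\alpha(\E(\V(c_1, \ldots, c_p)))}$ over all minimal prefixes $b$, we obtain $Z_\I = W_\I \sum_{\ell \ge 0}\big(\alpha(\I)/\alpha(\E(\I))\big)^{\ell}$. Splitting each minimal prefix $b$ at its last letter $j$ — which is a class occurring for the first time at position $m$, so that deleting it leaves a word in $\C_{\I \setminus \{j\}}$ — further gives $W_\I = \frac{1}{\alpha(\E(\I))}\sum_{j \in \I}\alpha_j\, Z_{\I \setminus \{j\}}$. Because any ordering of $\I$ is a valid minimal prefix and all arrival rates are positive, $W_\I > 0$.

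Both implications now follow. If~\eqref{eq:stability} fails for some $\I \in \ind$, then $\alpha(\I) \ge \alpha(\E(\I))$, the geometric factor in $Z_\I = W_\I \sum_{\ell \ge 0}\big(\alpha(\I)/\alpha(\E(\I))\big)^{\ell}$ diverges, and $W_\I > 0$, so $Z_\I = \infty$ and~\eqref{eq:stability-complicated} fails: the model is not stable. Conversely, assume~\eqref{eq:stability}. I argue by induction on $|\I|$ that $Z_\I < \infty$, the base case being $Z_\emptyset = 1$. In the induction step, each $\I \setminus \{j\}$ with $j \in \I$ is a nonempty subset of $\I$, hence an independent set of strictly smaller cardinality, so $Z_{\I \setminus \{j\}} < \infty$ by the induction hypothesis, and therefore $W_\I = \frac{1}{\alpha(\E(\I))}\sum_{j \in \I}\alpha_j\, Z_{\I \setminus \{j\}} < \infty$; moreover $\alpha(\I) < \alpha(\E(\I))$ by~\eqref{eq:stability}, so the geometric factor converges and $Z_\I = \frac{\sum_{j \in \I}\alpha_j\, Z_{\I \setminus \{j\}}}{\alpha(\E(\I)) - \alpha(\I)} < \infty$. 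Summing over the finitely many $\I \in \inde$ yields~\eqref{eq:stability-complicated}, completing the proof.

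The step I expect to be most delicate is making the prefix/tail factorization of $\C_\I$ fully rigorous: checking that $c \mapsto (b, w)$ is a bijection onto the set of minimal prefixes times the set of words over $\I$, that the weight $\prod_p \alpha_{c_p}/\alpha(\E(\V(c_1, \ldots, c_p)))$ factorizes across this bijection — which hinges on $\E(\V(c_1, \ldots, c_p))$ stabilizing at $\E(\I)$ as soon as $p \ge m$ — and that the analogous splitting of minimal prefixes at their last letter is again a bijection onto $\bigcup_{j \in \I} \C_{\I \setminus \{j\}}$. Once these bookkeeping points are settled the rest is immediate; as a bonus, the identity $Z_\I = \big(\alpha(\E(\I)) - \alpha(\I)\big)^{-1}\sum_{j \in \I}\alpha_j Z_{\I \setminus \{j\}}$ together with $\pi(\varnothing)^{-1} = \sum_{\I \in \inde} Z_\I$ gives a dynamic-programming evaluation of the normalization constant~\eqref{eq:normalization}, anticipating Section~\ref{sec:perf}.
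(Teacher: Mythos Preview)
Your proof is correct and follows essentially the same approach as the paper: both reduce~\eqref{eq:stability-complicated} to the convergence of geometric series by exploiting that the denominators depend only on the set $\V(c_1,\ldots,c_p)$, and both refine the partition $(\C_\I)_{\I\in\inde}$ according to the order of first appearance of classes. Your recursive presentation (peeling off one geometric factor and inducting on $|\I|$) is simply a repackaging of the paper's ``finite product of geometric series'' description, and the identity $Z_\I = (\alpha(\E(\I))-\alpha(\I))^{-1}\sum_{j\in\I}\alpha_j Z_{\I\setminus\{j\}}$ you derive is exactly Proposition~\ref{prop:piI-rec}; one minor slip is that $\I\setminus\{j\}$ need not be nonempty when $|\I|=1$, but the base case $Z_\emptyset=1$ already covers this.
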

	
	\begin{proof}[Sketch of proof]
		The proof consists of showing
		that~\eqref{eq:stability-complicated}
		is satisfied if and only if
		the arrival rates satisfy~\eqref{eq:stability}.
		The key idea is
		to observe that the denominators
		in~\eqref{eq:stability-complicated}
		only depend on the set of classes
		in the sequence $(c_1, \ldots, c_p)$.
		Guided by this observation,
		we can define a new partition
		of the state space~$\C$,
		that is a refinement of
		the partition $(\C_\I, \I \in \inde)$,
		such that the restriction of the sum
		in~\eqref{eq:stability-complicated}
		to each part of this new partition
		can be written
		as a finite product of geometric series
		that are convergent
		if and only if
		the conditions~\eqref{eq:stability}
		are satisfied. The details of the proof follow
		along the same lines
		as in the proof of Theorem~1
		in the appendix of~\cite{BC17}.
		The only difference is that
		the set of customer classes
		that can be in the queue
		at the same time
		is an independent set,
		so that the stability condition
		is also restricted
		to independent sets.
	\end{proof}
	
	The stability
	condition~\eqref{eq:stability} means that,
	for each independent set $\I$,
	there are enough items
	that can be matched with the classes in $\I$
	to prevent items of these classes
	from building up in the buffer.
	As proved in \cite{MM16},
	no matching policy can make the system stable
	if the arrival rates do not satisfy
	the stability condition~\eqref{eq:stability}
	(in particular, first-come-first-matched is
	maximally stable).
	It is also shown in \cite{MM16} that
	we would not be able to find arrival rates
	that satisfy this stability condition
	if the compatibility graph were bipartite.
	To better understand why
	the presence of odd cycles is necessary,
	we examine two toy examples.
	First consider a stochastic matching model
	similar to that of Section~\ref{subsec:model},
	except that the graph is bipartite
	and consists of only two classes connected by an edge.
	The stability condition~\eqref{eq:stability}
	becomes
	$\alpha_1 < \alpha_2$ and $\alpha_2 < \alpha_1$,
	and can therefore not be satisfied.
	Consistently, if we examine
	the associated Markov process,
	we obtain a two-sided infinite
	birth-and-death process,
	with transition rate $\alpha_1$ in one direction
	and $\alpha_2$ in the other,
	that is either transient
	(if $\alpha_1 \neq \alpha_2$)
	or null recurrent
	(if $\alpha_1 = \alpha_2$),
	but cannot be positive recurrent.
	Now, if we instead consider a stochastic matching model
	with three classes that are all compatible,
	the stability condition~\eqref{eq:stability} becomes
	$\alpha_1 < \alpha_2 + \alpha_3$, $\alpha_2 < \alpha_1 + \alpha_3$,
	and $\alpha_3 < \alpha_1 + \alpha_2$,
	and is satisfied for instance when
	$\alpha_1 = \alpha_2 = \alpha_3 = \frac13$.
	If we examine the associated Markov process,
	we obtain a three-sided infinite birth-and-death process
	that is positive recurrent under the above conditions.
	In the remainder, we will assume that
	the stability condition~\eqref{eq:stability}
	is satisfied
	and let $\pi$ denote the stationary distribution
	of the Markov process.

	\section{Performance metrics} \label{sec:perf}
	
	Another consequence of Proposition~\ref{prop:oi}
	is that we can adapt numerical methods
	originally developed for order-independent
	queues~\cite{SV15,C19,CBL21}
	to stochastic matching models.
	Note that calculating
	average performance metrics
	associated with \emph{loss} product-form
	queueing models
	is considered
	a hard problem in the queueing literature~\cite{LMK94}.
	Our task here is simplified
	by the fact that
	the number of items of a given class
	can be arbitrarily large if it is not zero,
	so that we can apply methods
	developed for \emph{open} queues.
	Also, since we established that
	the continuous-time stochastic matching model
	and its discrete-time counterpart
	have the same stationary distribution,
	the formulas derived in this section
	can be applied without change
	to the discrete-time
	variant of the model.
	
	\subsection{Probability of an empty system} \label{subsec:perf-empty}
	
	We first propose a new formula
	to compute the normalization constant
	of the stationary distribution
	of the stochastic matching model.
	With a slight abuse of notation,
	for each $\I \in \inde$,
	we let $\pi(\I)$ denote the stationary probability
	that the set of unmatched item classes is~$\I$, that is,
	\begin{align} \label{eq:piI-def}
		\pi(\I) = \sum_{c \in \C_\I} \pi(c),
		\quad \I \in \inde.
	\end{align}
	Equation~\eqref{eq:C-partition} guarantees that
	$\sum_{\I \in \inde} \pi(\I) = 1$.
	The following proposition
	gives a recursive formula
	for $\pi(\I)$ for each $\I \in \ind$.
	We will see later that
	this recursive formula
	also gives a closed-form expression for
	the normalization constant
	$\pi(\emptyset) = \pi(\varnothing)$.
	The proof technique is
	similar to that of \cite[Theorem~5.1]{C19},
	which was itself inspired from \cite[Theorem~4]{SV15},
	but the proof is more straightforward
	because we skip the intermediary state aggregation
	considered in these two works.
	
	\begin{proposition} \label{prop:piI-rec}
		The stationary distribution of
		the set of unmatched item classes
		satisfies the recursion
		\begin{align} \label{eq:piI-rec}
			\pi(\I) &=
			\frac{\sum_{i \in \I}
				\alpha_i \pi(\I \setminus \{i\})}
			{\alpha(\E(\I)) - \alpha(\I)},
			\quad \I \in \ind.
		\end{align}
	\end{proposition}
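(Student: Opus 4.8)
The natural approach is to sum the recursion~\eqref{eq:pic} for the stationary measures over all states $c \in \C_\I$. Fix $\I \in \ind$. Each state $c = (c_1, \ldots, c_n) \in \C_\I$ with $n \ge 1$ has a last letter $c_n \in \I$, and removing it yields a state $(c_1, \ldots, c_{n-1})$ that lies in $\C_{\I \setminus \{c_n\}}$ if $c_n$ does not appear earlier in $c$, and in $\C_\I$ itself if $c_n$ appears again among $c_1, \ldots, c_{n-1}$. Since $c \in \C_\I$, the denominator in~\eqref{eq:pic} is $\alpha(\E(\V(c_1,\ldots,c_n))) = \alpha(\E(\I))$ — this is the key simplification that makes the sum collapse. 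So the plan is to write
\begin{align*}
	\pi(\I)
	= \sum_{c \in \C_\I} \pi(c)
	= \sum_{i \in \I}
	\frac{\alpha_i}{\alpha(\E(\I))}
	\sum_{\substack{c' \in \V^*:\\ (c', i) \in \C_\I}} \pi(c'),
\end{align*}
grouping states of $\C_\I$ by their last letter $i$. The inner sum over $c'$ ranges over all $c'$ such that appending $i$ gives a word containing exactly the classes in $\I$; these are exactly the states $c'$ with $\V(c') = \I$ (i.e.\ $c' \in \C_\I$) together with those with $\V(c') = \I \setminus \{i\}$ (i.e.\ $c' \in \C_{\I \setminus \{i\}}$).

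Carrying this out, I would split the inner sum accordingly into a contribution from $\C_\I$ and one from $\C_{\I \setminus \{i\}}$, giving
\begin{align*}
	\pi(\I)
	= \sum_{i \in \I}
	\frac{\alpha_i}{\alpha(\E(\I))}
	\big( \pi(\I) + \pi(\I \setminus \{i\}) \big)
	= \frac{\alpha(\I)}{\alpha(\E(\I))} \pi(\I)
	+ \frac{1}{\alpha(\E(\I))} \sum_{i \in \I} \alpha_i \pi(\I \setminus \{i\}),
\end{align*}
using $\sum_{i \in \I} \alpha_i = \alpha(\I)$. Rearranging, $\big(1 - \alpha(\I)/\alpha(\E(\I))\big)\pi(\I)$ equals the last term, which upon multiplying through by $\alpha(\E(\I))$ yields exactly~\eqref{eq:piI-rec}. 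Note the stability condition~\eqref{eq:stability} guarantees $\alpha(\E(\I)) - \alpha(\I) > 0$, so the division is legitimate; and in the base case $\I$ a singleton $\{i\}$, the set $\I \setminus \{i\} = \emptyset$ and $\pi(\emptyset)$ is just the normalization constant $\pi(\varnothing)$, so the recursion is well-founded by induction on $|\I|$.

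The one step requiring genuine care is the bookkeeping in the inner sum: verifying that for $c' \in \V^*$ and $i \in \I$, the word $(c', i)$ belongs to $\C_\I$ if and only if $c' \in \C_\I \cup \C_{\I \setminus \{i\}}$, and that each such $c'$ is counted exactly once. This uses that $\C_\I$ collects precisely the words with class-set $\I$ (together with the fact that $\I$ being an independent set, every permutation and every such word is feasible, so there are no further restrictions beyond the class-set condition). This is routine but is where an error would slip in, so I would state it explicitly before performing the sum. Everything else is algebraic rearrangement.
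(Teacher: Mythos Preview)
Your proof is correct and follows essentially the same approach as the paper: grouping states of $\C_\I$ by their last letter, using the recursion~\eqref{eq:pic} with the key observation that the denominator equals $\alpha(\E(\I))$ for every $c \in \C_\I$, and then splitting the resulting inner sum according to whether the truncated word still contains class~$i$. The paper formalizes your bookkeeping step as the explicit disjoint partition $\C_\I = \bigcup_{i \in \I} \big( (\C_\I \cdot i) \cup (\C_{\I \setminus \{i\}} \cdot i) \big)$, but the argument is otherwise identical.
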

	
	\begin{proof}
		Let $\I \in \ind$.
		The proof of~\eqref{eq:piI-rec} relies
		on the following partition of
		the set $\C_\I$:
		\begin{align} \label{eq:recursion}
			\C_\I
			= \bigcup_{i \in \I}
			\left(
			(\C_\I \cdot i)
			\cup (\C_{\I \setminus \{i\}} \cdot i)
			\right),
		\end{align}
		where the unions are disjoint and,
		for each $\D \subseteq \C$
		and $i \in \V$, we write
		$\D \cdot i = \{(c_1, \ldots, c_n, i):
		(c_1, \ldots, c_n) \in \D\}$.
		We first apply \eqref{eq:pic}
		and \eqref{eq:piI-def}
		to rewrite $\pi(\I)$ as follows:
		\begin{align*}
			\pi(\I)
			= \frac1{\alpha(\E(\I))}
			\sum_{c \in \C_\I}
			\alpha_{c_n}
			\pi(c_1, \ldots, c_{n-1}).
		\end{align*}
		Then applying~\eqref{eq:recursion} yields
		\begin{align*}
			\alpha(\E(\I)) \pi(\I)
			=
			\sum_{i \in \I}
			\alpha_i
			\sum_{c \in \C_\I} \pi(c)
			+ \sum_{i \in \I}
			\alpha_i
			\sum_{c \in \C_{\I \setminus \{i\}}}
			\pi(c)
			= \sum_{i \in \I} \alpha_i \pi(\I)
			+ \sum_{i \in \I} \alpha_i
			\pi(\I \setminus \{i\}).
		\end{align*}
		Rearranging the terms yields~\eqref{eq:piI-rec}.
	\end{proof}
	
	Proposition~\ref{prop:piI-rec}
	gives a closed-form expression for
	the normalization constant
	$\pi(\emptyset) = \pi(\varnothing)$.
	It suffices to observe that
	the unnormalized stationary measure
	$\psi(\I) = \pi(\I) / \pi(\emptyset)$
	also satisfies recursion~\eqref{eq:piI-rec}.
	Therefore, we can first compute
	$\psi(\I)$ for each $\I \in \ind$
	by applying recursion~\eqref{eq:piI-rec}
	with the base case $\psi(\emptyset) = 1$,
	and then derive $\pi(\emptyset)$ by normalization,
	that is, $\pi(\emptyset)
	= (\sum_{\I \in \inde} \psi(\I))^{-1}$.
	Since there is a finite number of independent sets,
	this provides us with a closed-form expression
	for the normalization constant.
	Note that the denominator in~\eqref{eq:piI-rec}
	is always positive thanks to
	the stability condition~\eqref{eq:stability}.
	
	One can verify that unfolding the above recursion
	yields the closed-form expression
	derived in \cite[Equation~(5)]{MBM21}.
	The advantage of the above recursion
	is that it avoids duplicate calculations.
	More specifically, the overall complexity
	to evaluate the normalization constant
	using~\eqref{eq:piI-rec}
	with dynamic programming is $O(MN)$,
	where $M = |\ind|$ is the number of independent sets
	and $N = |\V|$ is the number of item classes.
	By comparison,
	if we naively apply \cite[Equation~(5)]{MBM21},
	the complexity is $O(MNP \times P!)$,
	where $P$ is the independence number
	of the compatibility graph,
	that is, the cardinality
	of a maximum independent set in this graph.
	In both cases however, we need to enumerate
	all independent sets of the compatibility graph,
	which is an NP-hard optimization problem in general.
	Consequently, both formulas
	can be applied to make calculations only
	when there are few classes,
	when symmetries between classes
	can be exploited to reduce complexity
	(see for instance~\cite{BCSV17}),
	or when the graph structure guarantees that
	the number of independent sets is small.
	The same remark holds for the formulas derived in
	Propositions~\ref{prop:waiting}
	to \ref{prop:lst-rec} below.
	However, we will see in Section~\ref{sec:app} that,
	even when these conditions are not meet,
	these formulas can be used to
	gain insight into the impact
	of parameters on performance.
	
	\subsection{Waiting probability} \label{subsec:perf-waiting-proba}
	
	The result of Proposition~\ref{prop:piI-rec}
	gives us directly a closed-form expression
	for the waiting probability
	of the items of each class.
	Indeed, it follows from the PASTA property that,
	for each $i \in \V$, the waiting probability
	of class-$i$ items is given by
	\begin{align} \label{eq:waiting}
		\omega_i
		= \sum_{\substack{
				\I \in \inde: \\
				i \notin \E(\I)
		}} \pi(\I).
	\end{align}
	Upon recalling that $\sum_{i \in \V} \alpha_i = 1$,
	Proposition~\ref{prop:waiting} below
	shows that the average waiting probability
	is simply given by $\frac12$.
	This result is not surprising.
	Indeed, each matching occurs between two items,
	one of that has just arrived
	(and therefore does not wait)
	and another that was waiting.
	However, this result highlights that
	improving the performance of one class
	(in terms of the waiting probability)
	necessarily comes at the expense of another class.
	
	\begin{proposition} \label{prop:waiting}
		The average waiting probability is given by
		$\sum_{i \in \V} \alpha_i \omega_i = \frac12$.
	\end{proposition}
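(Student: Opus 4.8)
The plan is to compute the weighted sum $\sum_{i \in \V} \alpha_i \omega_i$ directly from the formula~\eqref{eq:waiting} and to recognize it as a rate-conservation identity. First I would substitute~\eqref{eq:waiting} into the sum and exchange the order of summation, writing
\begin{align*}
	\sum_{i \in \V} \alpha_i \omega_i
	= \sum_{i \in \V} \alpha_i
	\sum_{\substack{\I \in \inde: \\ i \notin \E(\I)}} \pi(\I)
	= \sum_{\I \in \inde} \pi(\I)
	\sum_{\substack{i \in \V: \\ i \notin \E(\I)}} \alpha_i
	= \sum_{\I \in \inde} \pi(\I)
	\bigl(1 - \alpha(\E(\I))\bigr),
\end{align*}
using $\sum_{i \in \V}\alpha_i = 1$. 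So the quantity of interest equals $1 - \sum_{\I \in \inde} \pi(\I)\,\alpha(\E(\I))$, and it remains to show that $\sum_{\I \in \inde} \pi(\I)\,\alpha(\E(\I)) = \frac12$. By Proposition~\ref{prop:oi}, $\alpha(\E(\I))$ is exactly the departure rate $\mu(c)$ from any state $c \in \C_\I$; hence, grouping by independent set using~\eqref{eq:piI-def}, this sum is precisely the stationary mean departure rate $\sum_{c \in \C} \pi(c)\,\mu(c)$ — the long-run rate at which matchings occur.

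The key step is then a balance argument: in steady state, the rate at which items leave the buffer equals the rate at which items enter it, and since each matching removes one newly arrived item together with one waiting item, the departure rate of matched waiting items equals the arrival rate of items that get matched on arrival. More concretely, I would sum the partial balance equations~\eqref{eq:balance1} over all nonempty states $c \in \C$: the left-hand side sums to $\sum_{c \in \C \setminus \{\varnothing\}} \pi(c)\,\alpha(\E(\V(c)))$, which is the same as $\sum_{c \in \C} \pi(c)\,\mu(c)$ because $\mu(\varnothing) = \alpha(\E(\emptyset)) = 0$. The right-hand side is $\sum_{(c_1,\ldots,c_n) \in \C \setminus \{\varnothing\}} \pi(c_1,\ldots,c_{n-1})\,\alpha_{c_n}$, and by re-indexing over the predecessor state $d = (c_1,\ldots,c_{n-1})$ and the last class $c_n = i$ — noting that appending class $i$ to $d$ yields a state in $\C$ precisely when $i \in \E(\V(d))$ — this equals $\sum_{d \in \C} \pi(d) \sum_{i \in \E(\V(d))} \alpha_i = \sum_{d \in \C} \pi(d)\,\alpha(\E(\V(d)))$, i.e.\ the same sum $\sum_{c \in \C} \pi(c)\,\mu(c)$ again. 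This merely re-derives a tautology, so this route alone is not enough; the actual content is that the two interpretations of $\sum_c \pi(c)\mu(c)$ — as ``rate of matched arrivals'' and as ``rate of matched departures'' — are two halves of the total arrival rate~$1$. That is, the total arrival rate $\sum_{i \in \V}\alpha_i = 1$ splits as (rate of arrivals that wait) $+$ (rate of arrivals that are immediately matched) $= \sum_i \alpha_i\omega_i + \sum_{c}\pi(c)\mu(c)$, while in steady state the rate of arrivals that are immediately matched equals the rate at which waiting items depart, which is also $\sum_c \pi(c)\mu(c)$; hence $1 = 2\sum_c \pi(c)\mu(c)$, giving $\sum_c\pi(c)\mu(c) = \frac12$ and therefore $\sum_i \alpha_i\omega_i = 1 - \frac12 = \frac12$.

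The main obstacle is making the ``rate of immediately-matched arrivals equals rate of waiting departures'' equality rigorous rather than intuitive. I would justify it by observing that every transition of the Markov process that decreases the buffer size is triggered by an arrival of some class $i \in \E(\V(c))$, occurs at total rate $\mu(c) = \alpha(\E(\V(c)))$ from state $c$, and removes exactly one waiting item; conversely the arrivals counted in $\sum_i \alpha_i\omega_i$ are exactly those of a class $i$ that finds no compatible waiting item, i.e.\ $i \notin \E(\I)$ where $\I$ is the current class-set, and these are the arrivals that do \emph{not} trigger a size-decreasing transition. Since every arrival either triggers a departure-of-a-waiting-item or joins the buffer, and these two events are complementary, the rate identity $1 = \sum_i\alpha_i\omega_i + \sum_c\pi(c)\mu(c)$ holds by the PASTA-based decomposition in~\eqref{eq:waiting}, while the second identity $\sum_c\pi(c)\mu(c) = \sum_i\alpha_i\omega_i$ is Little's-law-style flow balance for the buffer: in steady state the buffer neither grows nor shrinks on average, so the rate in (arrivals that join, $= \sum_c\pi(c)\mu(c)$ by the re-indexing above, since an arrival joins iff it is matchable-free, wait — by~\eqref{eq:balance1} the join rate into the buffer from state $d$ is $\sum_{i \notin \E(\V(d))}\alpha_i = 1 - \mu(d)$, hmm) — the cleanest formalization is actually to equate, state by state via~\eqref{eq:balance1}, the flow $\pi(c)\mu(c)$ out of $c$ by a matching arrival with the flow $\pi(d)\alpha_{c_n}$ into $c$ by a non-matching arrival, then sum; the bookkeeping that the re-indexing is a bijection between $\{(c,\text{last item})\}$ and $\{(d, \text{matchable class } i)\}$ is the one place requiring care, and I would spell it out using the structure of $\C$ from~\eqref{eq:C-partition}.
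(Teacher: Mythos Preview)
Your core idea is correct and in fact more conceptual than the paper's proof: you recognize that $\sum_i \alpha_i \omega_i$ is the stationary rate at which items enter the buffer, that $\sum_{\I}\pi(\I)\alpha(\E(\I))$ is the rate at which items leave it, that these must be equal by flow balance, and that they sum to the total arrival rate~$1$. The paper establishes the same conservation identity $\sum_i\alpha_i\omega_i=\sum_i\alpha_i(1-\omega_i)$ but does so purely algebraically, expanding $\sum_{\I}\pi(\I)\alpha(\E(\I))$ via the recursion~\eqref{eq:piI-rec} and then performing a change of variable over independent sets to recover $\sum_i\alpha_i\omega_i$. Your route is more elementary and does not use the product form at all --- it shows the statement holds for any admissible policy under which the process is ergodic, which is exactly the intuition the paper gives informally just before the proposition.

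Where you go wrong is a sign slip that sends the argument in circles. When re-indexing the summed right-hand side of~\eqref{eq:balance1}, you write that appending class~$i$ to $d$ yields a state in~$\C$ ``precisely when $i\in\E(\V(d))$''. It is the opposite: $(d,i)\in\C$ iff $\V(d)\cup\{i\}$ is independent, i.e.\ iff $i\notin\E(\V(d))$. With the correct condition the re-indexed sum is $\sum_{d\in\C}\pi(d)\sum_{i\notin\E(\V(d))}\alpha_i=\sum_d\pi(d)\bigl(1-\mu(d)\bigr)$, so summing~\eqref{eq:balance1} gives $\sum_c\pi(c)\mu(c)=1-\sum_c\pi(c)\mu(c)$ and hence $\sum_c\pi(c)\mu(c)=\tfrac12$ directly --- not a tautology at all. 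You actually stumble onto the right condition later (``the join rate into the buffer from state~$d$ is $1-\mu(d)$, hmm'') but do not go back and repair the earlier step; once you do, your entire second paragraph collapses into two lines and the ``main obstacle'' you flag disappears.
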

	
	\begin{proof}
		We will verify that the per-class
		waiting probabilities satisfy
		the following conservation equation:
		\begin{align} \label{eq:balance}
			\sum_{i \in \V} \alpha_i \omega_i
			= \sum_{i \in \V} \alpha_i (1 - \omega_i),
		\end{align}
		after which the result follows
		by rearranging the terms.
		We have successively:
		\begin{align*}
			\sum_{i \in \V} \alpha_i (1 - \omega_i)
			&= \sum_{i \in \V} \alpha_i
			\sum_{\substack{
					\I \in \ind: \\
					i \in \E(\I)
			}} \pi(\I)
			= \sum_{\I \in \ind} \pi(\I)
			\sum_{i \in \E(\I)} \alpha_i
			= \sum_{\I \in \ind} \pi(\I) \alpha(\E(\I)),
		\end{align*}
		where the first equality follows from
		the definition of $\omega_i$,
		the second by exchanging the two sums,
		and the third from the definition of $\alpha(\E(\I))$.
		Applying~\eqref{eq:piI-rec}
		to the last member of this equality yields
		\begin{align*}
			\sum_{i \in \V} \alpha_i (1 - \omega_i)
			&= \sum_{\I \in \ind} \left(
			\sum_{i \in \I} \alpha_i \pi(\I)
			+ \sum_{i \in \I} \alpha_i \pi(\I \setminus \{i\})
			\right)
			= \sum_{i \in \V} \alpha_i
			\sum_{\substack{
					\I \in \ind: \\
					i \in \I
			}} \left(
			\pi(\I) + \pi(\I \setminus \{i\})
			\right),
		\end{align*}
		where the second equality follows
		by exchanging the sums.
		Equation~\eqref{eq:balance} follows
		by observing that, for each $i \in \V$, we have
		\begin{align*}
			\sum_{\substack{
					\I \in \ind: \\
					i \in \I
			}} \left(
			\pi(\I) + \pi(\I \setminus \{i\})
			\right)
			= \sum_{\substack{
					\I \in \inde: \\
					i \notin \E(\I)
			}} \pi(\I)
			= \omega_i,
		\end{align*}
		where the first equality follows
		from a change of variable
		and the second by definition of $\omega_i$.
	\end{proof}

	\subsection{Mean number of unmatched items
		and mean matching time}  \label{subsec:perf-mean-number}
	
	Consider a random vector $X = (X_1, X_2, \ldots, X_N)$
	distributed as
	the vector of numbers of unmatched items of each class
	in the stationary regime
	in the stochastic matching model
	of Section~\ref{subsec:model}.
	For now, we focus on the means
	$L_i = \esp{X_i}$ for each $i \in \V$,
	and we also let $L = \sum_{i \in \V} L_i$
	be the mean number of unmatched items,
	all classes included.
	Propositions~\ref{prop:LiI-rec}
	and~\ref{prop:LI-rec} below
	give a closed-form expression
	for these quantities
	by using a similar aggregation technique
	as in the proof of Proposition~\ref{prop:piI-rec}.
	As observed in~\cite[Section~3.4]{MBM21},
	these can be used to calculate
	the mean matching time of items,
	that is, the mean time between
	the arrival of an item
	and its matching with another item.
	Indeed, according to Little's law,
	the mean matching time of class-$i$ items
	is $L_i / \alpha_i$, for each $i \in \V$,
	and the overall mean matching time
	is given by $L / (\sum_{i \in \V} \alpha_i) = L$
	(since we assumed that $\sum_{i \in \V} \alpha_i = 1$).
	
	\begin{proposition}	\label{prop:LiI-rec}
		For each $i \in \V$, the mean
		number of unmatched class-$i$ items is given by
		$$
		L_i = \sum_{\I \in \ind: i \in \I}
		L_i(\I) \pi(\I),
		$$
		where $L_i(\I)$ is the mean number
		of unmatched class-$i$ items
		given that the set of unmatched items is $\I$, and is
		given by the recursion
		\begin{align} \label{eq:LiI-rec}
			L_i(\I) \pi(\I)
			= \frac{
				\alpha_i \pi(\I)
				+ \alpha_i \pi(\I \setminus \{i\})
				+ \sum_{j \in \I \setminus \{i\}}
				\alpha_j L_i(\I \setminus \{j\})
				\pi(\I \setminus \{j\})
			}{\alpha(\E(\I)) - \alpha(\I)},
			\quad \I \in \ind: i \in \I,
		\end{align}
		with the base case $L_i(\I) = 0$
		for each $\I \in \inde$ such that $i \notin \I$.
	\end{proposition}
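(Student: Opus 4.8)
The plan is to mirror the proof of Proposition~\ref{prop:piI-rec}, tracking not just the stationary mass of each state but a weighted version that counts class-$i$ items. For each state $c = (c_1, \ldots, c_n) \in \C$, let $x_i(c)$ denote the number of positions $p \in \{1, \ldots, n\}$ with $c_p = i$; applied to the stationary state this is distributed as $X_i$, so that $L_i = \esp{X_i} = \sum_{c \in \C} x_i(c) \pi(c)$. Splitting this sum along the partition~\eqref{eq:C-partition} gives $L_i = \sum_{\I \in \inde} L_i(\I) \pi(\I)$ with $L_i(\I) \pi(\I) = \sum_{c \in \C_\I} x_i(c) \pi(c)$; since no word in $\C_\I$ contains a class-$i$ item when $i \notin \I$, the summand vanishes for such~$\I$, which simultaneously yields the base case $L_i(\I) = 0$ for $i \notin \I$ and lets us restrict the sum to independent sets containing~$i$. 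It therefore remains to establish the recursion~\eqref{eq:LiI-rec} for $\I \in \ind$ with $i \in \I$.

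First I would use~\eqref{eq:pic} to peel off the last letter of each state: since $\alpha(\E(\V(c))) = \alpha(\E(\I))$ for every $c \in \C_\I$,
\begin{align*}
	\alpha(\E(\I)) L_i(\I) \pi(\I)
	= \sum_{c = (c_1, \ldots, c_n) \in \C_\I}
	x_i(c) \, \alpha_{c_n} \, \pi(c_1, \ldots, c_{n-1}).
\end{align*}
Next I would split the sum according to the last letter $c_n = j \in \I$, using the disjoint partition~\eqref{eq:recursion}: for fixed~$j$, the states with $c_n = j$ are exactly those of the form $c' \cdot j$ with either $c' \in \C_\I$ or $c' \in \C_{\I \setminus \{j\}}$, and then $(c_1, \ldots, c_{n-1}) = c'$. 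The key elementary observation is that appending a letter~$j$ to a word changes the class-$i$ count by $x_i(c' \cdot j) = x_i(c')$ when $j \neq i$ and $x_i(c' \cdot i) = x_i(c') + 1$, while $x_i(c') = 0$ whenever $c' \in \C_{\I \setminus \{i\}}$. Carrying this out and using the defining sums of $\pi(\I)$, $\pi(\I \setminus \{j\})$, $L_i(\I) \pi(\I)$, and $L_i(\I \setminus \{j\}) \pi(\I \setminus \{j\})$ over $\C_\I$, $\C_{\I \setminus \{j\}}$, and so on, the contribution of the term $j = i$ is $\alpha_i \left( L_i(\I) \pi(\I) + \pi(\I) + \pi(\I \setminus \{i\}) \right)$, and the contribution of each term $j \in \I \setminus \{i\}$ is $\alpha_j \left( L_i(\I) \pi(\I) + L_i(\I \setminus \{j\}) \pi(\I \setminus \{j\}) \right)$.

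Summing over $j \in \I$ and collecting the coefficient of $L_i(\I) \pi(\I)$, which equals $\alpha_i + \sum_{j \in \I \setminus \{i\}} \alpha_j = \alpha(\I)$, yields
\begin{align*}
	\left( \alpha(\E(\I)) - \alpha(\I) \right) L_i(\I) \pi(\I)
	= \alpha_i \pi(\I) + \alpha_i \pi(\I \setminus \{i\})
	+ \sum_{j \in \I \setminus \{i\}}
	\alpha_j L_i(\I \setminus \{j\}) \pi(\I \setminus \{j\}),
\end{align*}
and dividing by $\alpha(\E(\I)) - \alpha(\I) > 0$ (positive by the stability condition~\eqref{eq:stability}) gives~\eqref{eq:LiI-rec}. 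The main obstacle is purely bookkeeping: one must keep separate track of the extra $+1$ produced by the $j = i$ term and of the vanishing of $x_i$ on $\C_{\I \setminus \{i\}}$, and recognize that the terms proportional to $L_i(\I) \pi(\I)$ arising from every $j \in \I$ recombine into the factor $\alpha(\I)$ that is moved to the left-hand side --- exactly as the factor $\alpha(\I)$ appears in~\eqref{eq:piI-rec}. Once~\eqref{eq:LiI-rec} is proved, the stated formula for $L_i$ follows immediately from the total-expectation identity above together with the base case.
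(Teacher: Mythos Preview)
Your proof is correct and follows essentially the same approach as the paper: define $L_i(\I)\pi(\I)$ as a weighted sum over $\C_\I$, use the product-form recursion~\eqref{eq:pic} to peel off the last letter, split according to the partition~\eqref{eq:recursion}, track how the class-$i$ count changes when the last letter is $i$ versus $j\neq i$, and then collect the $\alpha(\I)L_i(\I)\pi(\I)$ term on the left. The only differences are notational (the paper writes $|c|_i$ for your $x_i(c)$) and in the level of detail displayed.
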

	
	\begin{proof}
		The proof follows similar steps
		as that of Proposition~\ref{prop:piI-rec},
		with some technical complications.
		Let $i \in \V$.
		We have
		$L_i = \sum_{\I \in \inde}
		L_i(\I) \pi(\I)$ where,
		for each $\I \in \inde$,
		$L_i(\I)$ is the mean number
		of unmatched class-$i$ items
		given that the set of unmatched
		item classes is $\I$.
		We have directly that $L_i(\I) = 0$
		if $i \notin \I$.
		If $i \in \I$, we have
		\begin{align*}
			L_i(\I) \pi(\I)
			&= \sum_{c \in \C_\I} |c|_i \pi(c)
			= \sum_{c \in \C_\I} |c|_i
			\frac{\alpha_{c_n}}{\alpha(\E(\I))}
			\pi(c_1, \ldots, c_{n-1}),
		\end{align*}
		where $|c|_i$ is the number of
		unmatched class-$i$ items
		in state~$c$, for each $c \in \C$,
		and the second equality follows from \eqref{eq:pic}.
		Applying partition~\eqref{eq:recursion} yields
		\begin{align*}
			\alpha(\E(\I)) L_i(\I) \pi(\I)
			&= \begin{aligned}[t]
				&\alpha_i \left[
				\sum_{c \in \C_\I}
				\left(|c|_i + 1 \right) \pi(c)
				+ \sum_{c \in \C_{\I \setminus \{i\}}}
				\left(0 + 1 \right) \pi(c)
				\right] \\
				&+ \sum_{j \in \I \setminus \{i\}}
				\alpha_j \left[
				\sum_{c \in \C_\I}
				|c|_i \pi(c)
				+ \sum_{c \in \C_{\I \setminus \{j\}}}
				|c|_i \pi(c)
				\right].
			\end{aligned}
		\end{align*}
		This, in turn, can be rewritten as
		\begin{align*}
			\alpha(\E(\I)) L_i(\I) \pi(\I)
			&= \begin{aligned}[t]
				&\alpha_i \left[
				L_i(\I) \pi(\I) + \pi(\I)
				+ \pi(\I \setminus \{i\})
				\right]
				+ \sum_{j \in \I \setminus \{i\}} \alpha_j \left[
				L_i(\I) \pi(\I)
				+ L_i(\I \setminus \{j\})
				\pi(\I \setminus \{j\})
				\right],
			\end{aligned} \\
			&= \begin{aligned}[t]
				&\alpha(\I) L_i(\I) \pi(\I)
				+ \alpha_i \pi(\I)
				+ \alpha_i \pi(\I \setminus \{i\})
				+ \sum_{j \in \I \setminus \{i\}}
				L_i(\I \setminus \{j\}) \pi(\I \setminus \{j\}).
			\end{aligned}
		\end{align*}
		Rearranging the terms yields~\eqref{eq:LiI-rec}.
	\end{proof}
	
	To the best of our knowledge,
	no previous work
	obtained the above formula
	for the stochastic matching model
	(either in recursive or in non-recursive form).
	We now provide an analogous formula
	to directly calculate
	the mean number of unmatched items,
	all classes included,
	without needing to calculate
	the mean number of unmatched items of each class.
	
	\begin{proposition} \label{prop:LI-rec}
		The mean number
		of unmatched items is given by
		\begin{align*}
			L = \sum_{\I \in \ind} L(\I) \pi(\I),
		\end{align*}
		where $L(\I)$ is the mean number of unmatched items
		given that the set of unmatched items is $\I$, and is
		given by the recursion
		\begin{align} \label{eq:LI-rec}
			L(\I) \pi(\I)
			= \frac{
				\alpha(\E(\I)) \pi(\I)
				+ \sum_{i \in \I} \alpha_i
				L(\I \setminus \{i\})
				\pi(\I \setminus \{i\})
			}{\alpha(\E(\I)) - \alpha(\I)},
			\quad \I \in \ind,
		\end{align}
		with the base case $L(\emptyset) = 0$.
	\end{proposition}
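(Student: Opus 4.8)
The plan is to obtain the recursion~\eqref{eq:LI-rec} by summing the per-class recursion~\eqref{eq:LiI-rec} of Proposition~\ref{prop:LiI-rec} over all classes, so that the work reduces to some index bookkeeping. First I would record two elementary facts. Since $L_i(\I) = 0$ whenever $i \notin \I$, the conditional mean $L(\I) = \sum_{i \in \I} L_i(\I)$ coincides with $\sum_{i \in \V} L_i(\I)$; hence, by the law of total expectation, $L = \sum_{i \in \V} L_i = \sum_{\I \in \inde} L(\I) \pi(\I)$, and this equals $\sum_{\I \in \ind} L(\I) \pi(\I)$ once we observe that $L(\emptyset) = 0$ because the empty state contains no unmatched item. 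This already yields the first display of the statement together with the base case, so it remains to establish~\eqref{eq:LI-rec}.

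Fix $\I \in \ind$. Summing~\eqref{eq:LiI-rec} over $i \in \I$ gives
\begin{align*}
	(\alpha(\E(\I)) - \alpha(\I))\, L(\I) \pi(\I)
	= \sum_{i \in \I} \alpha_i \pi(\I)
	+ \sum_{i \in \I} \alpha_i \pi(\I \setminus \{i\})
	+ \sum_{i \in \I} \sum_{j \in \I \setminus \{i\}}
	\alpha_j\, L_i(\I \setminus \{j\})\, \pi(\I \setminus \{j\}).
\end{align*}
I would then simplify each of the three sums on the right-hand side. The first is $\alpha(\I) \pi(\I)$ by definition of $\alpha(\I)$. The second equals $(\alpha(\E(\I)) - \alpha(\I)) \pi(\I)$ by Proposition~\ref{prop:piI-rec}; combined with the first it contributes $\alpha(\E(\I)) \pi(\I)$. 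For the double sum I would exchange the order of summation, so that for each fixed $j \in \I$ the inner sum becomes $\sum_{i \in \I \setminus \{j\}} L_i(\I \setminus \{j\}) = L(\I \setminus \{j\})$ by definition of the conditional mean over the independent set $\I \setminus \{j\}$; the double sum therefore collapses to $\sum_{i \in \I} \alpha_i\, L(\I \setminus \{i\})\, \pi(\I \setminus \{i\})$. Substituting these identities into the display above gives exactly~\eqref{eq:LI-rec}.

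The computation is almost entirely mechanical; the only delicate step is the interchange in the double sum together with the identification of the inner sum with $L(\I \setminus \{j\})$, which is precisely what makes the recursion close on the aggregate conditional means $L(\I \setminus \{i\})$ rather than on the per-class quantities $L_i(\I \setminus \{i\})$. An alternative that parallels the proof of Proposition~\ref{prop:LiI-rec} more directly is to write $L(\I) \pi(\I) = \sum_{c \in \C_\I} |c|\, \pi(c)$, where $|c|$ is the number of unmatched items in state $c$, then apply the recursion~\eqref{eq:pic} and the partition~\eqref{eq:recursion} of $\C_\I$, and invoke Proposition~\ref{prop:piI-rec} to simplify the resulting term $\sum_{i \in \I} \alpha_i \pi(\I \setminus \{i\})$; this route is slightly more computation-heavy but does not rely on~\eqref{eq:LiI-rec}.
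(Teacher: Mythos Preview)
Your proof is correct and follows exactly the approach indicated in the paper: sum the per-class recursion~\eqref{eq:LiI-rec} over $i \in \I$ and simplify the resulting expression using Proposition~\ref{prop:piI-rec}. Your write-up is in fact a fully spelled-out version of the paper's one-line proof, and the alternative direct computation you sketch at the end is also valid.
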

	
	\begin{proof}
		The result follows by applying~\eqref{eq:LiI-rec}
		to $L(\I) = \sum_{i \in \I} L_i(\I)$
		and simplifying the expression with \eqref{eq:piI-rec}.	
	\end{proof}
	
	Unfolding recursion~\eqref{eq:LI-rec}
	yields the formula given in
	\cite[Proposition~4]{CDFB20}
	but, as for the normalization constant,
	applying dynamic programming
	leads to a significant reduction of complexity
	compared to a naive application of this proposition.
	More specifically,
	the time complexity
	to compute $L_i$ for some $i \in \V$
	or $L$ is $O(MN)$ if we use our approach,
	while the time complexity
	to compute $L$ is $O(MNP \times P!)$
	if we naively apply the formula
	of \cite[Proposition~4]{CDFB20}.

	\subsection{Distribution of the number of unmatched items}  \label{subsec:perf-distribution-number}
	
	As before, let $X = (X_1, X_2, \ldots, X_N)$
	be a random vector distributed like
	the vector of numbers of unmatched items of each class
	in stationary regime
	in the stochastic matching model
	of Section~\ref{subsec:model}.
	A closed-form expression for
	the joint probability distribution of the vector~$X$
	can be derived in the same way as in
	\cite[Theorem~3.2, Equation~(3.1)]{C19}.
	Here, we are instead interested in
	the probability generating function $g_X$
	of this vector, defined as
	\begin{align} \label{eq:pgf-def}
		g_X(z)
		&= \esp{\prod_{i \in \V} {z_i}^{X_i}},
		\quad z = (z_1, z_2, \ldots, z_N) \in \Z,
	\end{align}
	where $\Z = \{z = (z_1, z_2, \ldots, z_N) \in \R_+^N:
	z_1 \le 1, z_2 \le 1, \ldots, z_N \le 1\}$.
	One can show that this generating series
	converges whenever $z \in \Z$.
	The following notation will be useful
	to state Proposition~\ref{prop:pgf-rec} below.
	First, for each
	$\lambda = (\lambda_1, \lambda_2,
	\ldots, \lambda_N) \in \R_+^N$,
	we write $\lambda(\A) = \sum_{i \in \A} \lambda_i$
	for each $\A \subseteq \V$.
	Also, given two vectors
	$\lambda = (\lambda_1, \lambda_2,
	\ldots, \lambda_N) \in \R_+^N$
	and $\mu = (\mu_1, \mu_2, \ldots, \mu_N) \in \R_+^N$,
	we let $\lambda \mu
	= (\lambda_1 \mu_1, \lambda_2 \mu_2, \ldots, \lambda_N \mu_N)$
	denote their elementwise product.
	We are now in position to give
	a closed-form expression for
	the probability generating function of the vector $X$.
	Both this result and its proof are adapted from
	\cite[Proposition~3.1]{CBL21}.
	
	\begin{proposition} \label{prop:pgf-rec}
		The probability generating function of
		the numbers of unmatched items
		of each class in the buffer
		is given by
		\begin{align} \label{eq:pgf-rec}
			g_X(z)
			= \frac
			{\sum_{\I \in \inde} \psi_{\alpha z, \alpha}(\I)}
			{\sum_{\I \in \inde} \psi_{\alpha, \alpha}(\I)},
		\end{align}
		where, for each
		$\lambda = (\lambda_1, \lambda_2,
		\ldots, \lambda_N) \in \R_+^N$
		and $\mu = (\mu_1, \mu_2, \ldots, \mu_N) \in \R_+^N$
		such that
		$\lambda(\I) < \mu(\E(\I))$
		for each $\I \in \ind$,
		the set function $\psi_{\lambda, \mu}$
		is defined by the recursion
		\begin{align} \label{eq:pilambda-rec}
			\psi_{\lambda, \mu}(\I)
			= \frac
			{\sum_{i \in \I} \lambda_i
				\psi_{\lambda, \mu}(\I \setminus \{i\})}
			{\mu(\E(\I)) - \lambda(\I)},
			\quad \I \in \ind,
		\end{align}
		with the base case
		$\psi_{\lambda, \mu}(\emptyset) = 1$.
	\end{proposition}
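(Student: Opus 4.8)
The plan is to reduce the probability generating function to a ratio of normalizing constants of two stationary measures of the form~\eqref{eq:pic-def} --- the actual stationary measure and a \emph{tilted} version obtained by multiplying each arrival rate $\alpha_i$ by $z_i$ in the \emph{numerators} only --- and then to aggregate these word-level measures over the partition~\eqref{eq:C-partition} by exactly the argument already used to prove Proposition~\ref{prop:piI-rec}.

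First I would rewrite $g_X$ as a sum over $\C$. On the event that the buffer is in state $c = (c_1, \ldots, c_n) \in \C$, we have $\prod_{i \in \V} {z_i}^{X_i} = \prod_{p=1}^n z_{c_p}$, since each position of the word contributes exactly one factor. Plugging this together with~\eqref{eq:pic-def} into~\eqref{eq:pgf-def} gives
\begin{align*}
	g_X(z) = \pi(\varnothing) \sum_{(c_1, \ldots, c_n) \in \C}
	\prod_{p=1}^n \frac{(\alpha z)_{c_p}}{\alpha(\E(\V(c_1, \ldots, c_p)))},
\end{align*}
where $\alpha z = (\alpha_1 z_1, \ldots, \alpha_N z_N)$. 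This suggests introducing, for any $\lambda, \mu \in \R_+^N$ with $\lambda(\I) < \mu(\E(\I))$ for all $\I \in \ind$, the word-level measure $\pi_{\lambda, \mu}(c_1, \ldots, c_n) = \prod_{p=1}^n \lambda_{c_p} / \mu(\E(\V(c_1, \ldots, c_p)))$ on $\C$, which satisfies $\pi_{\lambda, \mu}(\varnothing) = 1$ and the one-step recursion $\pi_{\lambda, \mu}(c_1, \ldots, c_n) = \lambda_{c_n} \pi_{\lambda, \mu}(c_1, \ldots, c_{n-1}) / \mu(\E(\V(c)))$, in analogy with~\eqref{eq:pic}. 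With this notation $g_X(z) = \pi(\varnothing) \sum_{c \in \C} \pi_{\alpha z, \alpha}(c)$ and, by~\eqref{eq:normalization}, $\pi(\varnothing)^{-1} = \sum_{c \in \C} \pi_{\alpha, \alpha}(c)$. Since $z \in \Z$ forces $0 \le z_i \le 1$ for each $i$, we have $0 \le \pi_{\alpha z, \alpha}(c) \le \pi_{\alpha, \alpha}(c)$ for every $c \in \C$, and the stability condition~\eqref{eq:stability} (equivalently~\eqref{eq:stability-complicated}, by Theorem~\ref{theo:stability}) makes the right-hand side summable over $\C$; this both proves that the series defining $g_X(z)$ converges and shows that $\lambda = \alpha z$, $\mu = \alpha$ is an admissible pair, since $(\alpha z)(\I) \le \alpha(\I) < \alpha(\E(\I))$ for all $\I \in \ind$.

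Next I would pass from words to independent sets. Setting $\psi_{\lambda, \mu}(\I) = \sum_{c \in \C_\I} \pi_{\lambda, \mu}(c)$ for $\I \in \inde$, absolute convergence together with the partition~\eqref{eq:C-partition} gives $\sum_{c \in \C} \pi_{\lambda, \mu}(c) = \sum_{\I \in \inde} \psi_{\lambda, \mu}(\I)$, so~\eqref{eq:pgf-rec} follows once I check that $\psi_{\lambda, \mu}$ obeys recursion~\eqref{eq:pilambda-rec}. For this I would repeat the computation in the proof of Proposition~\ref{prop:piI-rec}, with the numerator rates $\alpha$ replaced by $\lambda$ and the denominator rates $\alpha$ replaced by $\mu$: for $c \in \C_\I$ one has $\V(c) = \I$, hence the one-step recursion for $\pi_{\lambda, \mu}$ yields $\mu(\E(\I)) \psi_{\lambda, \mu}(\I) = \sum_{c \in \C_\I} \lambda_{c_n} \pi_{\lambda, \mu}(c_1, \ldots, c_{n-1})$; splitting $\C_\I$ according to the disjoint partition~\eqref{eq:recursion} then gives $\mu(\E(\I)) \psi_{\lambda, \mu}(\I) = \sum_{i \in \I} \lambda_i \big( \psi_{\lambda, \mu}(\I) + \psi_{\lambda, \mu}(\I \setminus \{i\}) \big) = \lambda(\I) \psi_{\lambda, \mu}(\I) + \sum_{i \in \I} \lambda_i \psi_{\lambda, \mu}(\I \setminus \{i\})$, and dividing by the positive quantity $\mu(\E(\I)) - \lambda(\I)$ produces~\eqref{eq:pilambda-rec}. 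Applying this to the numerator ($\lambda = \alpha z$) and to the denominator ($\lambda = \alpha$, i.e.\ $z = (1, \ldots, 1)$) of $g_X$ gives~\eqref{eq:pgf-rec}.

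I expect the only genuine difficulty to be the careful handling of the infinite sums: the word-level state space $\C$ is infinite, so the interchange $\sum_{c \in \C} = \sum_{\I \in \inde} \sum_{c \in \C_\I}$ and the regrouping of terms in the recursion step must be justified by absolute convergence, which is precisely what the domination $\pi_{\alpha z, \alpha} \le \pi_{\alpha, \alpha}$ and the stability condition supply. Everything else is the same bookkeeping already carried out for Proposition~\ref{prop:piI-rec}, so in the written proof I would keep the computation brief and refer back to that argument.
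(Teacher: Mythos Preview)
Your proposal is correct and follows essentially the same approach as the paper: rewrite $g_X(z)$ as a ratio of word-level sums over $\C$ with numerator rates tilted by $z$, aggregate over the partition $(\C_\I)_{\I\in\inde}$ to define $\psi_{\lambda,\mu}(\I)$, and derive recursion~\eqref{eq:pilambda-rec} by the same last-letter splitting used in Proposition~\ref{prop:piI-rec}. Your treatment is in fact more careful than the paper's about the absolute-convergence justification via the domination $\pi_{\alpha z,\alpha}\le\pi_{\alpha,\alpha}$, which the paper leaves implicit.
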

	
	\begin{proof}
		It suffices to observe that,
		according to~\eqref{eq:pic-def},
		\eqref{eq:normalization},
		and \eqref{eq:pgf-def},
		$g_X(z)$ can be rewritten as
		\begin{align*}
			g_X(z)
			&= \frac{ \displaystyle
				\sum_{c \in \C} \prod_{p = 1}^n
				\frac{z_{c_p} \alpha_{c_p}}{\alpha(\E(\V(c_1, \ldots, c_p)))}
			}{ \displaystyle
				\sum_{c \in \C} \prod_{p = 1}^n
				\frac{\alpha_{c_p}}{\alpha(\E(\V(c_1, \ldots, c_p)))}
			}
			= \frac{ \displaystyle
				\sum_{\I \in \inde}
				\sum_{c \in \C_\I} \prod_{p = 1}^n
				\frac{z_{c_p} \alpha_{c_p}}{\alpha(\E(\V(c_1, \ldots, c_p)))}
			}{ \displaystyle
				\sum_{\I \in \inde}
				\sum_{c \in \C_\I} \prod_{p = 1}^n
				\frac{\alpha_{c_p}}{\alpha(\E(\V(c_1, \ldots, c_p)))}
			}
			= \frac{ \displaystyle
				\sum_{\I \in \inde} \psi_{z \alpha, \alpha}(\I)
			}{ \displaystyle
				\sum_{\I \in \inde} \psi_{\alpha, \alpha}(\I)
			},
		\end{align*}
		where, for each
		$\lambda = (\lambda_1, \lambda_2, \ldots,
		\lambda_N) \in \R_+^N$
		and $\mu = (\mu_1, \mu_2, \ldots, \mu_N) \in \R_+^N$
		such that $\lambda(\I) < \mu(\E(\I))$
		for each $\I \in \ind$, we have
		\begin{align*}
			\psi_{\lambda, \mu}(\I)
			= \sum_{c \in \C_\I} \prod_{p = 1}^n
			\frac{\lambda_{c_p}}{\mu(\E(\V(c_1, \ldots, c_p)))},
			\quad \I \in \inde.
		\end{align*}
		The recursive expression~\eqref{eq:pilambda-rec}
		follows from a similar argument
		as in the proof of Proposition~\ref{prop:piI-rec}.
	\end{proof}
	
	Taking $\lambda = \mu = \alpha$
	in~\eqref{eq:pilambda-rec}
	yields~\eqref{eq:piI-rec},
	so that in particular
	the denominator in \eqref{eq:pgf-rec}
	is the normalization constant
	of the stationary distribution~\eqref{eq:pic-def}.
	Referring back to the
	order-independent loss queue
	introduced in Section~\ref{sec:oi},
	the vector $\lambda = (\lambda_1, \lambda_2, \ldots, \lambda_N)$
	in~\eqref{eq:pilambda-rec}
	can be seen as the arrival rates
	of the customer classes,
	while the vector
	$\mu = (\mu_1, \mu_2, \ldots, \mu_N)$
	could be seen as service rates.
	
	\subsection{Distribution of the matching time}  \label{subsec:perf-distribution-time}
	
	Let $T = (T_1, T_2, \ldots, T_N)$ denote
	a vector distributed like the vector of
	matching times of each class
	in the stationary regime
	in the stochastic matching model
	of Section~\ref{subsec:model}.
	For each $i \in \V$,
	the Laplace-Stieltjes transform
	of the stationary matching time~$T_i$
	is defined as
	\begin{align} \label{eq:lst-def}
		\varphi_{T_i}(u)
		= \esp{e^{-u T_i}},
		\quad u \in \R_+.
	\end{align}
	One can show that
	this expectation is finite
	whenever $u \in \R_+$,
	but we will be especially interested
	in the range $0 \le u \le \alpha_i$.
	Proposition~\ref{prop:lst-rec} below
	gives a closed-form expression for
	this Laplace-Stieltjes transform
	by applying the distributional form
	of Little's law, in a similar way
	as in~\cite[Corollary~3.2]{CBL21}.
	
	\begin{proposition} \label{prop:lst-rec}
		For each $i \in \V$,
		the Laplace-Stieltjes transform
		of the matching time of class~$i$
		is given by
		\begin{align} \label{eq:lst-rec}
			\varphi_{T_i}(u)
			= g_{X}(z),
			\quad 0 \le u \le \alpha_i,
		\end{align}
		where the function $g_X$
		is defined on $\Z$ by~\eqref{eq:pgf-rec}
		and the vector $z \in \Z$
		is given by
		\begin{align*}
			z_j =
			\begin{cases}
				1 - \frac{u}{\alpha_i}
				&\text{if $j = i$,} \\
				1
				&\text{if $j \in \V \setminus \{i\}$.}
			\end{cases}
		\end{align*}
	\end{proposition}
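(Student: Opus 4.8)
The plan is to apply the distributional form of Little's law to the sub-process of class-$i$ items, following the argument used for order-independent queues in~\cite[Corollary~3.2]{CBL21}. The starting point is the observation that, under first-come-first-matched, class-$i$ items are matched in their order of arrival. Indeed, by the partition~\eqref{eq:C-partition} the set of classes present in the buffer is always an independent set, so as soon as one class-$i$ item is waiting, no item of a class in~$\E_i$ is waiting; consequently a newly arriving class-$i$ item cannot be matched immediately and is appended at the end of the buffer, behind the class-$i$ items already present. Since an incoming item compatible with class~$i$ is always matched with the \emph{oldest} waiting class-$i$ item, the class-$i$ items leave the buffer in first-in-first-out order (this also holds trivially in the degenerate case of a class-$i$ item that is matched upon arrival, since then no class-$i$ item is present just before or just after that transition).

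First I would spell out the consequence of this FIFO property for the number $X_i$ of unmatched class-$i$ items. Consider a tagged class-$i$ item. By the FIFO property, the class-$i$ items present in the buffer at the instant it is matched are exactly those class-$i$ items that arrived while it was itself waiting; since class-$i$ items arrive according to a Poisson process of rate~$\alpha_i$ and, crucially, its matching time~$T_i$ is a function only of the buffer content found upon its arrival and of the subsequent arrivals of classes in~$\E_i$ --- hence is independent of the posterior class-$i$ arrival process --- the number of such items is, conditionally on $T_i$, Poisson-distributed with mean~$\alpha_i T_i$. By stationarity and the PASTA property, the number of unmatched class-$i$ items seen at such a matching epoch has the same law as~$X_i$. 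Taking probability generating functions then gives $\esp{{z_i}^{X_i}} = \esp{\e^{-\alpha_i (1 - z_i) T_i}}$ for every $z_i \in [0, 1]$.

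It then remains to make the change of variable $z_i = 1 - u / \alpha_i$, which ranges over $[0, 1]$ precisely for $0 \le u \le \alpha_i$: the right-hand side becomes $\varphi_{T_i}(u)$ by~\eqref{eq:lst-def}, while the left-hand side is, by~\eqref{eq:pgf-def}, exactly $g_X(z)$ evaluated at the vector $z$ with $z_i = 1 - u / \alpha_i$ and $z_j = 1$ for $j \in \V \setminus \{i\}$ --- a vector that indeed lies in~$\Z$. Combining the two identities yields~\eqref{eq:lst-rec}.

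The main obstacle is the rigorous justification of the distributional Little's law step: one must check that the matching time of the tagged class-$i$ item is genuinely independent of the class-$i$ arrivals that follow it, and that the time-stationary, arrival-stationary and departure-stationary versions of $X_i$ do coincide. The FIFO property established above --- which itself rests on the independent-set structure~\eqref{eq:C-partition} of the state space and on the absence of loops in the compatibility graph --- is precisely what makes this possible; once it is in place, the remaining steps are routine and can be adapted almost verbatim from the proof of~\cite[Corollary~3.2]{CBL21}.
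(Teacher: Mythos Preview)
Your approach is essentially the paper's: both invoke the distributional form of Little's law (the paper cites the Keilson--Servi theorem~\cite{KS88} directly rather than re-deriving it as you do), relying on the class-$i$ FIFO property that you correctly establish from the independent-set structure of the buffer.

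One inaccuracy in your independence argument: $T_i$ is \emph{not} in general a function only of the initial buffer content and the subsequent arrivals of classes in~$\E_i$. An arrival of a class $j \in \V \setminus (\E_i \cup \{i\})$ can remove an item older than the tagged one (namely any older class-$k$ item with $k \in \E_j$), thereby changing which incoming $\E_i$-item ends up matched with the tagged one. The correct --- and still sufficient --- claim is that $T_i$ is determined by the buffer content upon arrival together with the subsequent arrivals of \emph{all} classes in $\V \setminus \{i\}$; independence from the posterior class-$i$ stream then follows from the mutual independence of the per-class Poisson processes.
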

	
	\begin{proof}
		Let $i \in \V$.
		The main argument consists of observing
		that class~$i$
		satisfies the assumptions of
		the following theorem,
		proved in \cite[Theorem~1]{KS88}:
		\begin{theorem}[Distributional form of Little's law]
			Let an ergodic queueing system
			be such that, for a given class~$i$ of customers:
			\begin{enumerate}[label=(\alph*)]
				\item \label{item:a}
				arrivals form
				a Poisson process
				with rate $\alpha_i$;
				\item \label{item:b}
				all arriving customers
				enter the system,
				and remain in the system until served,
				i.e.\ there is no blocking,
				balking, or reneging;
				\item \label{item:c}
				the customers leave
				the system one at a time
				in order of arrival;
				\item \label{item:d}
				for any time $t$,
				the arrival process
				after time~$t$
				and the time in the system
				of any customer
				arriving before~$t$
				are independent.
			\end{enumerate}
			Then the moment generating function $g_{X_i}$
			of the ergodic number $X_i$
			of customers of this class in the system
			and the Laplace-Stieltjes transform $\varphi_{T_i}$
			of the ergodic time $T_i$
			spent in the system
			by a customer of this class
			satisfy
			$g_{X_i}(z) = \varphi_{T_i}(\alpha_i (1 - z))$.
		\end{theorem}
		
		The distributional form
		of Little's law therefore implies that
		\begin{align*}
			\varphi_{T_i}(u)
			= g_{X_i}\left( 1 - \frac{u}{\alpha_i} \right),
			\quad 0 \le u \le \alpha_i,
		\end{align*}
		where $g_{X_i}$ is the probability generating function
		of the stationary number $X_i$
		of unmatched class-$i$ items.
		We conclude by observing that,
		for each $z_i \in [0, 1]$, we have
		$g_{X_i}(z_i) = g_X(z)$
		where $z \in \Z$ is the $N$-dimensional vector
		with $z_i$ in component~$i$
		and $1$ elsewhere.
	\end{proof}

	\section{Impact of load on performance} \label{sec:app}
	
	Performance in a stochastic matching model
	is the result of an intricate interaction
	between arrival rates and matching compatibilities.
	Even finding arrival rates that
	stabilize the matching model associated
	with a given compatibility graph
	is non-trivial \textit{a priori}.
	More subtly, it was observed in \cite{CDFB20} that,
	given a compatibility graph and a set of arrival rates
	that make the system stable,
	adding more edges to the compatibility graph
	may hurt performance
	(even if~\eqref{eq:stability} guarantees
	that the system will remain stable).
	Furthermore, applying the results of
	Sections~\ref{sec:oi}
	and~\ref{sec:perf}
	to derive performance
	in a real-world matching problem
	will often be impossible
	due to the complexity of the formulas.
	In this section, we show that
	these results are also useful to gain intuition about
	the choices of parameters that optimize performance
	or at least make the system stable.
	By analogy with queueing theory,
	we define the \emph{loads}
	of the independent sets as
	\begin{align} \label{eq:load}
		\rho(\I) = \frac{\alpha(\I)}{\alpha(\E(\I))},
		\quad \I \in \ind.
	\end{align}
	These loads will be key to analyze performance,
	as for instance
	the stability condition~\eqref{eq:stability}
	can be rewritten as
	$\rho(\I) < 1$ for each $\I \in \ind$.
	Although this definition
	is inspired from the analogy with queueing theory,
	there is a fundamental difference
	between~\eqref{eq:load} and
	the regular definition of load in a queue,
	namely in~\eqref{eq:load}
	the arrival rates appear both
	in the numerator and denominator.
	This implies in particular that
	the loads are not increasing functions
	of the arrival rates.
	
	\subsection{Heuristics to optimize performance} \label{subsec:heuristic}
	
	Given the conclusions of~\cite{CDFB20},
	one may wonder,
	for a given vector of arrival rates
	$\alpha = (\alpha_1, \alpha_2, \ldots, \alpha_N)$
	and a given compatibility graph
	that lead to a stable
	stochastic matching model,
	whether or not removing edges
	from the compatibility graph
	may improve performance,
	and in this case which edge(s) should be removed.
	This discrete optimization problem
	is difficult in general,
	as the number of configurations
	is exponential in the number of classes.
	To gain insight into this question,
	we consider a related problem
	that consists of finding arrival rates
	that stabilize the system
	and minimize the mean matching time
	for a given compatibility graph.
	Solving this optimization problem exactly
	with the formulas of Section~\ref{sec:perf}
	is practically unfeasible
	when the number of classes is large,
	but we propose two heuristics
	that yield acceptable performance in practice.
	
	\paragraph{Degree proportional}
	
	It was shown in \cite[Theorem~1]{MM16} that
	there exists a vector of arrival rates
	$\alpha = (\alpha_1, \alpha_2, \ldots, \alpha_N)$
	that stabilizes the system
	if and only if the compatibility graph is non-bipartite
	(which we assumed in Section~\ref{subsec:model}),
	but exhibiting such arrival rates
	is not trivial \emph{a priori}.
	Lemma~\ref{lem:weight} below
	gives a simple method to generate
	(an infinite number of)
	such arrival rates.
	
	\begin{lemma} \label{lem:weight}
		Let $A = (a_{i, j})_{i, j \in \V}$
		denote a symmetric matrix
		such that $a_{i, j} > 0$
		if and only if nodes $i$ and $j$
		are neighbors in the compatibility graph.
		For each $i \in \V$,
		the weight of class~$i$
		is defined as $d_i = \sum_{j \in \E_i} a_{i, j}$.
		The matching model is stable
		if the arrival rates are chosen
		to be proportional to the weights, that is,
		\begin{align} \label{eq:weight}
			\alpha_i
			= \frac{d_i}{\sum_{j \in \V} d_j},
			\quad i \in \V.
		\end{align}
	\end{lemma}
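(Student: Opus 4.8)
The plan is to verify the stability condition~\eqref{eq:stability}, namely that $\alpha(\I) < \alpha(\E(\I))$ for every independent set $\I \in \ind$, when the arrival rates are given by~\eqref{eq:weight}. Since the normalization $\sum_{j \in \V} d_j$ in~\eqref{eq:weight} is a common positive factor, it suffices to show $\sum_{i \in \I} d_i < \sum_{j \in \E(\I)} d_j$ for each $\I \in \ind$. Expanding the weights, the left-hand side is $\sum_{i \in \I} \sum_{j \in \E_i} a_{i, j}$, a sum over all edges $\{i, j\}$ with at least one endpoint in $\I$, where an edge contributes $a_{i,j}$ once for each of its endpoints lying in $\I$. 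Crucially, because $\I$ is independent, \emph{no} edge has both endpoints in $\I$: any edge counted on the left has exactly one endpoint $i \in \I$ and its other endpoint $j \in \E_i \subseteq \E(\I)$. Hence the left-hand side equals $\sum_{i \in \I} \sum_{j \in \E_i} a_{i,j} = \sum_{\{i,j\}: i \in \I} a_{i,j}$ (each such edge counted once).

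Next I would bound the right-hand side from below by the same quantity. For each $j \in \E(\I)$ we have $d_j = \sum_{k \in \E_j} a_{j,k} \ge \sum_{k \in \E_j \cap \I} a_{j,k}$, since all the $a_{j,k}$ are nonnegative (positive on edges, zero elsewhere). Summing over $j \in \E(\I)$ gives
\begin{align*}
  \sum_{j \in \E(\I)} d_j
  \ge \sum_{j \in \E(\I)} \sum_{k \in \E_j \cap \I} a_{j,k}
  = \sum_{\{j,k\}: k \in \I} a_{j,k},
\end{align*}
which is exactly the left-hand side computed above (with the roles of the summation labels swapped). So at this point we have $\sum_{i \in \I} d_i \le \sum_{j \in \E(\I)} d_j$, and it remains to argue the inequality is \emph{strict}.

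To get strictness, I would exhibit at least one term in $\sum_{j \in \E(\I)} d_j$ that was dropped when passing to the lower bound, i.e.\ a node $j \in \E(\I)$ with an edge to some $k \notin \I$. Fix any $i \in \I$ (nonempty) and any neighbor $j \in \E_i$; then $j \in \E(\I)$. I claim $j$ has a neighbor outside $\I$: if every neighbor of $j$ were in $\I$, then in particular $i$'s neighbor $j$ together with $i$ would lie in an odd structure — more directly, since the compatibility graph is connected, non-bipartite, and loopless, it contains an odd cycle, and I would use this to locate a node in $\E(\I)$ incident to an edge leaving $\I$. The cleanest route: if $\E_j \subseteq \I$ for the chosen $j$, pick any $k \in \E_j$ (so $k \in \I$); since $\I$ is independent, $k$'s neighbors all lie in $\E(\I) \setminus \I$ except we must check $j$ is among them — indeed $j \in \E_k$ and $j \notin \I$ (as $j \in \E_i$, $i \in \I$, independence forces $j \notin \I$), so the edge $\{k, j\}$ contributes $a_{k,j}$ to $d_k$'s defining sum, and $k \notin \I$ while $k \in \E(\I)$... this needs care. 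The robust argument is: the edge $\{i,j\}$ with $i \in \I$, $j \notin \I$ means $j \in \E(\I)$; now non-bipartiteness guarantees $\I \neq \V$ is not the whole story, but rather that the graph is not a single edge's worth of structure — I would instead simply note that since $\E(\I) \not\subseteq \I$ (because $\I$ independent and the graph has no isolated vertices, every vertex of $\I$ has a neighbor outside $\I$), and since the graph is connected with an odd cycle, there exists $j^\star \in \E(\I)$ and $k^\star \in \E_{j^\star}$ with $k^\star \notin \I$; then $a_{j^\star, k^\star} > 0$ is a strictly positive term present in $d_{j^\star}$ but absent from the lower bound, giving the strict inequality. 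The main obstacle is precisely this last step: producing such a $j^\star$ rigorously, which is where the hypothesis that the compatibility graph is connected, loopless, and non-bipartite gets used (a bipartite graph with $\I$ one full side would make $\E(\I) \subseteq \V \setminus \I$ with equality of the two sums). I would handle it by the odd-cycle argument, possibly citing~\cite{MM16} for the structural fact that non-bipartiteness is what rules out the degenerate equality case.
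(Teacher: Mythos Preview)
Your main computation is correct and matches the paper's proof exactly: expand $\sum_{i \in \I} d_i$, swap the order of summation to get $\sum_{j \in \E(\I)} \sum_{i \in \I \cap \E_j} a_{j,i}$, and bound this above by $\sum_{j \in \E(\I)} d_j$. The only gap is the one you flag yourself --- the strictness step --- where you circle around the right idea without landing it.

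Here is the clean argument the paper uses (and which your discussion is groping toward). The inequality fails to be strict precisely when $\E_j \subseteq \I$ for every $j \in \E(\I)$, i.e.\ when $\E(\E(\I)) \subseteq \I$. Suppose this holds. Then $\E(\I)$ is itself an independent set: if $j_1, j_2 \in \E(\I)$ were adjacent, we would have $j_2 \in \E_{j_1} \subseteq \I$, contradicting $\I \cap \E(\I) = \emptyset$. Now connectedness forces $\I \cup \E(\I) = \V$ (any vertex outside $\I \cup \E(\I)$ would, along a path to $\I$, produce a neighbor violating one of the two inclusions). Hence $\V$ decomposes into the two disjoint independent sets $\I$ and $\V \setminus \I = \E(\I)$, making the compatibility graph bipartite --- contradiction. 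This is exactly the ``odd-cycle argument'' you allude to, and it does not require an explicit cycle or an appeal to~\cite{MM16}; the contrapositive of bipartiteness is all you need.
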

	
	\begin{proof}
		Assume that
		the arrival rates are given
		by~\eqref{eq:weight}
		for some matrix~$A$ that
		satisfies the assumptions
		of the proposition,
		and let $d = \sum_{i \in \V} d_i$.
		We will prove that these arrival rates
		satisfy the stability condition~\eqref{eq:stability}.
		Consider an independent set $\I \in \ind$.
		We have successively
		\begin{align*}
			d \cdot \alpha(\I)
			&= \sum_{i \in \I} d_i
			= \sum_{i \in \I} \sum_{j \in \E_i} a_{i,j}
			= \sum_{j \in \E(\I)}
			\sum_{i \in \I \cap \E_j} a_{j,i}
			< \sum_{j \in \E(\I)}
			\sum_{i \in \E_j} a_{j,i}
			= \sum_{j \in \E(\I)}
			d_j
			= d \cdot \alpha(\E(\I)).
		\end{align*}
		To prove that the above inequality is indeed strict,
		it suffices to show that we cannot find
		an independent set $\I \in \ind$ such that
		$\E(\E(\I)) = \I$.
		We can prove this by contradiction.
		Indeed, if there were such
		an independent set $\I$,
		this would imply that
		$\E(\I) = \V \setminus \I$
		(because the compatibility graph is connected)
		and that $\E(\I)$ is also an independent set,
		which would contradict our assumption
		that the compatibility graph is non-bipartite.
	\end{proof}
	
	The matrix~$A$ can be interpreted
	as the adjacency matrix
	of a weighted variant
	of the compatibility graph.
	As a special case,
	if $A$ is the adjacency matrix
	of the compatibility graph,
	then $d_i$ is the degree of class~$i$,
	in which case the arrival rates
	defined by~\eqref{eq:weight}
	are called \emph{degree proportional}.
	For example, in the graph
	of \figurename~\ref{fig:toy-graph},
	the degree-proportional arrival rates
	are $\alpha_1 = \alpha_2 = \frac14$,
	$\alpha_3 = \frac38$, and $\alpha_4 = \frac18$;
	choosing these arrival rates yields approximately
	a probability 0.17
	that the system is empty
	and a mean matching time
	of 2.25 time units.
	In general, if the number of classes
	is sufficiently small so that all independent sets
	\emph{can}
	be enumerated,
	this degree-proportional solution
	can also be used to initialize
	an optimization algorithm
	that searches for arrival rates
	that minimize the mean matching time
	by applying Proposition~\ref{prop:LI-rec}.
	
	\paragraph{Load minimization}
	
	The loads defined in~\eqref{eq:load}
	also play an instrumental role
	in better understanding which arrival rates
	yield acceptable performance.
	To understand this, let us first focus on
	the probability that the system is empty,
	which is the inverse of the normalization constant
	and is given by Proposition~\ref{prop:piI-rec}.
	First observe that~\eqref{eq:piI-rec}
	can be rewritten as
	\begin{align} \label{eq:piI-load}
		\pi(\I)
		&= \frac{\rho(\I)}{1 - \rho(\I)}
		\left(
		\sum_{i \in \I} \frac{\alpha_i}{\alpha(\I)}
		\pi(\I \setminus \{i\})
		\right),
		\quad \I \in \ind,
	\end{align}
	where the loads $\rho(\I)$ for $\I \in \ind$
	are given by~\eqref{eq:load}.
	Expanding the recursion yields
	\begin{align*}
		\pi(\I)
		&= \pi(\emptyset)
		\sum_{s \in \mathfrak{S}_\I}
		\left(
		\prod_{p = 1}^{|\I|}
		\frac{\rho(\{s_1, \ldots, s_p\})}
		{1 - \rho(\{s_1, \ldots, s_p\})}
		\right)
		\left(
		\prod_{p = 1}^{|\I|}
		\frac{\alpha_{s_p}}{\alpha(\{s_1, \ldots, s_p\})}
		\right),
		\quad \I \in \ind,
	\end{align*}
	where $\mathfrak{S}_\I$ is the set
	of permutations of the set $\I$,
	interpreted as the set of all sequences
	$s = (s_1, s_2, \ldots, s_{|\I|})$
	in which each element of $\I$
	appears exactly once.
	The first product may tend to infinity
	if the load $\rho(\J)$ of
	any set $\J \subseteq \I$ tends to one,
	while the second product is always
	between zero and one.
	By applying the normalization condition,
	we obtain that $1 / \pi(\emptyset)$
	is a linear combination of products of the form
	\begin{align*}
		\prod_{p = 1}^n
		\frac{\rho(\{s_1, \ldots, s_p\})}
		{1 - \rho(\{s_1, \ldots, s_p\})},
		\quad \I = \{s_1, s_2, \ldots, s_n\} \in \ind.
	\end{align*}
	Similarly, we can rewrite~\eqref{eq:LI-rec}
	as follows:
	\begin{align} \label{eq:LI-load}
		L(\I) \pi(\I)
		&= \frac{\pi(\I)}{1 - \rho(\I)}
		+ \frac{\rho(\I)}{1 - \rho(\I)}
		\left(
		\sum_{i \in \I}
		\frac{\alpha_i}{\alpha(\I)}
		L(\I \setminus \{i\}) \pi(\I \setminus \{i\})
		\right),
		\quad \I \in \ind.
	\end{align}
	In both cases,
	the variations of the function
	$x \mapsto \frac{x}{1 - x}$ on $(0, 1)$
	suggest that solving
	the following optimization problem
	is a good heuristic to maximize
	the probability that the system is empty
	and minimize the mean matching time:
	\begin{align} \label{eq:optimization}
		\begin{aligned}
			\underset{\alpha}{\text{Minimize}}
			\quad &
			\max_{\I \in \ind} \left( \rho(\I) \right), \\
			\text{subject to}
			\quad &
			\alpha_i \ge 0, \quad i \in \V, \\
			& \sum_{i \in \V} \alpha_i = 1.
		\end{aligned}
	\end{align}
	One can verify that
	a solution of this optimization problem satisfies
	the stability condition~\eqref{eq:stability}.
	In the example
	of \figurename~\ref{fig:toy-graph},
	the objective function of
	the optimization problem~\eqref{eq:optimization}
	can be rewritten as
	\begin{align*}
		\max\left(
		\frac{\alpha_1}{\alpha_2 + \alpha_3},
		\frac{\alpha_2}{\alpha_1 + \alpha_3},
		\frac{\alpha_3}{\alpha_1 + \alpha_2 + \alpha_4},
		\frac{\alpha_4}{\alpha_3},
		\frac{\alpha_1 + \alpha_4}{\alpha_2 + \alpha_3},
		\frac{\alpha_2 + \alpha_4}{\alpha_1 + \alpha_3}
		\right),
	\end{align*}
	and the unique solution is
	$\alpha_1 = \alpha_2 = \alpha_3 = \frac13$ and $\alpha_4 = 0$.
	These arrival rates happen to be a global maximum for the probability that the system is empty, with maximum value 0.25, and a local minimum for the mean matching time, with minimum value 1.5 time units\footnote{Although numerical evaluations suggest that this is also a global minimum, proving this conjecture is complicated by the structure of~\eqref{eq:LI-rec}. Better understanding the properties of the mean matching time seen as a function of the arrival rates and/or the graph structure would be an interesting topic for future work.}.
	
	In general,
	calculating a solution of
	the optimization problem~\eqref{eq:optimization}
	requires enumerating all independent sets,
	which is precisely why
	we may not be able to apply
	the results of Section~\ref{sec:perf}
	on the first place.
	This issue can be mitigated
	by focusing on the independent sets
	with cardinality at most~$n$ for some $n \ll N$,
	or on the independent sets
	containing the most \emph{central} nodes
	(under an appropriate definition
	of centrality),
	but the obtained solution
	may not be stable.
	To further stress the importance of
	minimizing the loads,
	we now consider a scaling regime
	in which the load of
	a (subset of) class(es) becomes close to one,
	thus making the system unstable.

	\subsection{Heavy-traffic regime} \label{subsec:heavy}
	
	We consider a scaling regime
	called \emph{heavy traffic}
	by analogy with the regime of the same name
	studied in queueing theory.
	In a nutshell,
	the stability condition
	associated with a maximal independent set
	becomes violated in~\eqref{eq:stability},
	so that items of these classes
	accumulate in the buffer
	(while items of other classes become scarce);
	our result describes
	the corresponding limiting distributions and performance metrics.
	The analysis relies on
	the closed-form expressions
	derived in Section~\ref{sec:perf}.
	The proof technique and the results
	generalize those of
	\cite[Theorem~3.1]{CBL21} but,
	as announced in the introduction,
	the conclusions are different
	and illustrate the fundamental difference
	between stochastic matching models
	and conventional queueing models.

	\paragraph{Scaling regime}
	
	Fix $\I \in \ind$, an independent set
	that is maximal with respect to the inclusion relation.
	We have in particular that
	$\E(\I) = \V \setminus \I$
	and
	$\alpha(\I) + \alpha(\E(\I))
	= \alpha(\I) + \alpha(\V \setminus \I)
	= 1$.
	We consider a scaling regime where,
	for each $i \in \V$,
	\begin{align} \label{eq:scaling}
		\alpha_i = \begin{cases}
			p_i \frac\rho{1 + \rho}, &i \in \I, \\
			q_i \frac1{1 + \rho}, &i \in \V \setminus \I,
		\end{cases}
	\end{align}
	with $\sum_{i \in \I} p_i
	= \sum_{i \in \V \setminus \I} q_i = 1$
	and $\rho \in (0, 1)$.
	In this way,
	$\rho = \alpha(\I) / \alpha(\V \setminus \I)
	= \rho(\I)$
	is the ratio of the arrival rate of the classes in $\I$
	to the arrival rate of their compatible classes,
	that is, the load of set $\I$.
	We will show that, under some technical assumptions,
	as $\rho \uparrow 1$,
	the buffer becomes saturated with
	items of the classes in $\I$
	while running out of items of the classes
	in $\V \setminus \I$.
	That the classes in $\I$ become saturated
	is a consequence of~\eqref{eq:scaling},
	as we have
	\begin{align} \label{eq:saturation}
		\lim_{\rho \uparrow 1} \alpha(\I)
		= \lim_{\rho \uparrow 1} \alpha(\V \setminus \I)
		= \frac12.
	\end{align}
	The following technical assumption guarantees that
	the set $\I$ is the only one that becomes saturated.
	
	\begin{assumption} \label{ass:stability}
		There exists $\rho^* \in [0, 1)$ such that
		the stability conditions~\eqref{eq:stability}
		are satisfied by the measures $\alpha$
		defined by~\eqref{eq:scaling}
		for each $\rho \in (\rho^*, 1)$, and
		\begin{align*}
			\lim_{\rho \uparrow 1} \alpha(\J)
			< \lim_{\rho \uparrow 1} \alpha(\E(\J)),
			\quad \J \in \ind \setminus \{\I\}.
		\end{align*}
	\end{assumption}
	
	\paragraph{Asymptotic distribution}
	
	Proposition~\ref{prop:heavy} below shows that,
	under the above technical assumption,
	the mean number of items
	of the classes in $\I$
	goes to infinity
	like $(1 - \rho)^{-1}$
	as $\rho \uparrow 1$,
	while the mean number of items
	of the classes in $\V \setminus \I$
	goes to zero.
	Furthermore, the distribution of items
	of the classes in $\I$
	depends on their respective arrival rates
	but not on their compatibility constraints
	with the classes in $\V \setminus \I$.
	Similarly, as intuition suggests,
	the waiting probability of
	the classes in~$\I$ tends to one
	as $\rho \uparrow 1$,
	while the waiting probability of
	the classes in~$\V \setminus \I$
	tends to zero.
	In this way,
	despite the structural differences
	between the stochastic matching model
	and a more conventional queueing model,
	the classes in $\I$ experience
	the same performance as
	in an M/M/1 multi-class queue
	in which the load $\rho$ tends to one
	while, for each $i \in \I$,
	the relative arrival rate
	of class~$i$ is kept constant equal to $p_i$.
	The fundamental difference
	with an M/M/1 multi-class queue,
	discussed in the remark below,
	is that the number of items
	of the classes in $\V \setminus \I$
	tends to zero.
	Given $\rho^* \in [0, 1)$
	and two functions $f$ and $g$,
	defined on $(\rho^*, 1)$, such that
	$g(\rho) > 0$ for each $\rho \in (\rho^*, 1)$,
	we write $f(\rho) \underset{\rho \uparrow 1}{\sim} g(\rho)$
	if $f(\rho) / g(\rho) \xrightarrow[\rho \uparrow 1]{} 1$.
	
	\begin{proposition} \label{prop:heavy}
		If Assumption~\ref{ass:stability}
		is satisfied, then the following
		limiting results hold:
		\begin{enumerate}[label=(14-\alph*),
			leftmargin=\widthof{~(14-a)~}]
			\item \label{item:heavy-piI}
			The  stationary distribution
			of the set of unmatched item classes
			satisfies:
			\begin{align*}
				\pi(\I) &\xrightarrow[\rho \uparrow 1]{} 1,
				&
				\pi(\J) &\xrightarrow[\rho \uparrow 1]{} 0,
				\quad \J \in \inde \setminus \{\I\}.
			\end{align*}
			\item \label{item:heavy-wi}
			The waiting probability
			of each item class satisfies:
			\begin{align*}
				\omega_i &\xrightarrow[\rho \uparrow 1]{} 1,
				\quad i \in \I,
				&
				\omega_i &\xrightarrow[\rho \uparrow 1]{} 0,
				\quad i \in \V \setminus \I.
			\end{align*}
			\item \label{item:heavy-Li}
			The mean number of
			unmatched items of each class satisfies:
			\begin{align*}
				L_i = \esp{X_i}
				&\underset{\rho \uparrow 1}{\sim}
				p_i \frac{\rho}{1 - \rho},
				\quad i \in \I,
				&
				L_i = \esp{X_i}
				&\xrightarrow[\rho \uparrow 1]{} 0,
				\quad i \in \V \setminus \I.
			\end{align*}
			In particular, the limiting mean number of
			unmatched items (over all classes) satisfies
			$L \underset{\rho \uparrow 1}{\sim}
			\frac{\rho}{1 - \rho}$.
			\item \label{item:heavy-X}
			The number of unmatched items of each class
			has the following limit in distribution:
			\begin{align*}
				(1 - \rho) (X_i)_{i \in \I}
				&\xrightarrow[\rho \uparrow 1]{d}
				\expo(1) (p_i)_{i \in \I},
				&
				(X_i)_{i \in \V \setminus \I}
				&\xrightarrow[\rho \uparrow 1]{d}
				0,
			\end{align*}
			where $\expo(1)$ represents a unit-mean
			exponentially-distributed random variable.
			\item \label{item:heavy-ETi}
			The mean matching time of
			each class satisfies:
			\begin{align*}
				\esp{T_i}
				&\underset{\rho \uparrow 1}{\sim}
				\frac2{1 - \rho},
				\quad i \in \I,
				&
				\esp{T_i}
				&\xrightarrow[\rho \uparrow 1]{} 0,
				\quad i \in \V \setminus \I.
			\end{align*}
			\item \label{item:heavy-Ti}
			The matching time of each class
			has the following limit in distribution:
			\begin{align*}
				(1 - \rho) T_i
				&\xrightarrow[\rho \uparrow 1]{d}
				\expo\left( \frac12 \right),
				\quad i \in \I,
				&
				T_i 
				&\xrightarrow[\rho \uparrow 1]{d}
				0,
				\quad i \in \V \setminus \I,
			\end{align*}
			where $\expo(\frac12)$ represents
			an exponentially-distributed random variable
			with rate $\frac12$.
		\end{enumerate}
	\end{proposition}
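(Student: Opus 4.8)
The plan is to substitute the scaling~\eqref{eq:scaling} into the recursive formulas of Section~\ref{sec:perf}, handling the six items in the order they are listed, and relying throughout on one structural fact: since $\I$ is maximal for inclusion, no independent set strictly contains $\I$, so $\I$ is never of the form $\J \setminus \{j\}$ for an independent set $\J$ and a class $j \in \J$. Consequently, in every recursion of Section~\ref{sec:perf} the quantity indexed by $\I$ never appears among the predecessors of another quantity, which confines the ``blow-up at $\I$'' to the single set $\I$ and makes the asymptotic bookkeeping tractable.

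I would begin with~\ref{item:heavy-piI}, working with the unnormalized measure $\psi_{\alpha,\alpha}$ of Proposition~\ref{prop:pgf-rec}, for which $\pi(\J) = \psi_{\alpha,\alpha}(\J) \big/ \sum_{\K \in \inde} \psi_{\alpha,\alpha}(\K)$. By~\eqref{eq:scaling} the arrival rates have finite positive limits ($\alpha_i \to p_i/2$ for $i \in \I$ and $\alpha_i \to q_i/2$ for $i \in \V \setminus \I$), and Assumption~\ref{ass:stability} keeps the denominators $\alpha(\E(\J)) - \alpha(\J)$ in~\eqref{eq:pilambda-rec} bounded away from $0$ for every $\J \neq \I$; an induction on $|\J|$ then gives $\psi_{\alpha,\alpha}(\J) \to \psi^*(\J) \in (0,\infty)$ for all $\J \in \inde \setminus \{\I\}$. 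For $\J = \I$ the denominator in~\eqref{eq:pilambda-rec} is $\alpha(\E(\I)) - \alpha(\I) = \tfrac{1-\rho}{1+\rho}$, so $(1-\rho)\,\psi_{\alpha,\alpha}(\I) \to C := \sum_{i \in \I} p_i\,\psi^*(\I \setminus \{i\}) > 0$; in particular $\psi_{\alpha,\alpha}(\I) \to \infty$. Since the finitely many remaining terms converge to finite limits, $\sum_{\K} \psi_{\alpha,\alpha}(\K) \sim \psi_{\alpha,\alpha}(\I)$ as $\rho \uparrow 1$, whence $\pi(\I) \to 1$ and $\pi(\J) \to 0$ for $\J \neq \I$. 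Item~\ref{item:heavy-wi} follows immediately from~\eqref{eq:waiting}: for $i \in \I$ one has $i \notin \E(\I) = \V \setminus \I$, so $\omega_i \ge \pi(\I) \to 1$; for $i \in \V \setminus \I = \E(\I)$ the set $\I$ is excluded from the sum defining $\omega_i$, so $\omega_i \le 1 - \pi(\I) \to 0$.

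Next I would treat~\ref{item:heavy-Li} directly from Propositions~\ref{prop:LiI-rec} and~\ref{prop:LI-rec}. An induction on $|\J|$, again using Assumption~\ref{ass:stability} to bound the denominators away from $0$ for $\J \neq \I$ together with the now-established $\pi(\J) \to 0$ there, shows $L_i(\J)\pi(\J) \to 0$ and $L(\J)\pi(\J) \to 0$ for all $\J \neq \I$; by maximality of $\I$ the index $\I$ never occurs among the predecessors in~\eqref{eq:LiI-rec} or~\eqref{eq:LI-rec}, so this blow-up does not propagate. At $\J = \I$ the denominator is $\tfrac{1-\rho}{1+\rho}$, the terms $\alpha_i\pi(\I) \to p_i/2$ in~\eqref{eq:LiI-rec} and $\alpha(\E(\I))\pi(\I) \to 1/2$ in~\eqref{eq:LI-rec} dominate while the others vanish, so $L_i(\I)\pi(\I) \sim \tfrac{p_i}{1-\rho}$ and $L(\I)\pi(\I) \sim \tfrac1{1-\rho}$; summing over independent sets gives $L_i \sim p_i \tfrac{\rho}{1-\rho}$ for $i \in \I$ (using $\rho \to 1$), $L_i \to 0$ for $i \in \V \setminus \I$, and $L \sim \tfrac{\rho}{1-\rho}$. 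Little's law $\esp{T_i} = L_i/\alpha_i$ combined with~\eqref{eq:scaling} then yields~\ref{item:heavy-ETi}, since $L_i/\alpha_i \sim \tfrac{1+\rho}{1-\rho} \sim \tfrac2{1-\rho}$ for $i \in \I$ while $L_i/\alpha_i \to 0$ for $i \in \V \setminus \I$.

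Finally I would obtain the distributional limits~\ref{item:heavy-X} and~\ref{item:heavy-Ti} from Propositions~\ref{prop:pgf-rec} and~\ref{prop:lst-rec} together with the continuity theorem for Laplace transforms of nonnegative vectors. Taking $z_i = 1 - (1-\rho)\sigma_i$ with $\sigma_i \ge 0$ for $i \in \I$ and $z_j \in [0,1]$ arbitrary for $j \in \V \setminus \I$, a short computation gives $\alpha(\E(\I)) - \sum_{i\in\I}\alpha_i z_i = \alpha(\E(\I)) - \alpha(\I) + (1-\rho)\sum_{i\in\I}\alpha_i\sigma_i = (1-\rho)\bigl(\tfrac1{1+\rho} + \sum_{i\in\I}\alpha_i\sigma_i\bigr)$, and repeating the reasoning of~\ref{item:heavy-piI} (the $\V\setminus\I$-coordinates of $z$ only enter $\psi$-values indexed by sets $\neq\I$, which stay bounded, once more by maximality of $\I$) yields $(1-\rho)\,\psi_{\alpha z,\alpha}(\I) \to C\big/\bigl(1 + \lim_{\rho\uparrow1}\sum_{i\in\I}p_i\sigma_i\bigr)$ and hence $g_X(z) \to \bigl(1 + \lim_{\rho\uparrow1}\sum_{i\in\I}p_i\sigma_i\bigr)^{-1}$. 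Specializing $z_i = e^{-(1-\rho)s_i}$ ($s_i \ge 0$) for $i \in \I$ and $z_j = 1$ for $j \in \V\setminus\I$ gives $\esp{e^{-\sum_{i\in\I}s_i(1-\rho)X_i}} \to \bigl(1 + \sum_{i\in\I}p_i s_i\bigr)^{-1}$, the Laplace transform of $\expo(1)(p_i)_{i\in\I}$, so $((1-\rho)X_i)_{i\in\I} \xrightarrow[\rho\uparrow1]{d} \expo(1)(p_i)_{i\in\I}$; the statement $(X_i)_{i\in\V\setminus\I} \xrightarrow[\rho\uparrow1]{d} 0$ follows from $L_i \to 0$ and Markov's inequality (convergence in probability to a constant), and joint convergence follows by combining the two. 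For~\ref{item:heavy-Ti}, Proposition~\ref{prop:lst-rec} with $u = (1-\rho)v$ (which lies in $[0,\alpha_i]$ once $\rho$ is close to $1$, since $\alpha_i \to p_i/2 > 0$) gives $\varphi_{(1-\rho)T_i}(v) = g_X(z)$ with $z_i = 1 - \tfrac{(1-\rho)v}{\alpha_i}$ and $z_j = 1$ otherwise; here $\sum_{i\in\I}\alpha_i\sigma_i = v$, so the computation above gives $\varphi_{(1-\rho)T_i}(v) \to (1+2v)^{-1} = \tfrac{1/2}{1/2+v}$, the Laplace--Stieltjes transform of $\expo(\tfrac12)$, hence $(1-\rho)T_i \xrightarrow[\rho\uparrow1]{d} \expo(\tfrac12)$ for $i \in \I$, while $T_i \xrightarrow[\rho\uparrow1]{d} 0$ for $i \in \V\setminus\I$ follows from $\esp{T_i} \to 0$ and Markov's inequality. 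The recurring obstacle is precisely this bookkeeping — keeping track of which recursively-defined quantities blow up and which remain bounded, and pinning down the leading constants so that they match the claimed expressions — and the two ingredients that make it go through are the maximality of $\I$, which prevents the blow-up from propagating through any recursion, and Assumption~\ref{ass:stability}, which is exactly what keeps the denominators attached to the sets $\J \neq \I$ away from zero.
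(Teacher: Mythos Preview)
Your proposal is correct and follows essentially the same approach as the paper's proof: both substitute the scaling into the recursive formulas of Section~\ref{sec:perf}, isolate $\I$ as the unique set whose unnormalized weight blows up (using Assumption~\ref{ass:stability} to keep all other denominators bounded away from zero), and pass to the limit in the generating-function expressions of Propositions~\ref{prop:pgf-rec} and~\ref{prop:lst-rec} via the continuity theorem for transforms. Your explicit emphasis on the maximality of $\I$ (so that $\I$ never appears as a predecessor in any recursion) makes the bookkeeping slightly cleaner, and your argument for~\ref{item:heavy-wi} via the bounds $\omega_i \ge \pi(\I)$ for $i \in \I$ and $\omega_i \le 1 - \pi(\I)$ for $i \in \V \setminus \I$ is simpler than the paper's, which invokes the conservation identity~\eqref{eq:balance}; likewise, deducing $(X_i)_{i \in \V \setminus \I} \to 0$ from $L_i \to 0$ and Markov's inequality is a shortcut over the paper's joint-transform computation.
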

	
	\begin{proof}[Sketch of proof]
		The proof of each property relies
		on the corresponding expression
		derived in Section~\ref{sec:perf}.
		Each proof follows the same pattern:
		the expression for which
		we want to calculate the limit is a fraction
		whose numerator and denominator are
		sums of terms given by recursive expressions;
		we first use the recursive expressions
		to identify the dominating term in each sum,
		and then we simplify the dominating terms
		to derive the limit.
		The complete proof is given
		in the appendix.
	\end{proof}
	
	\begin{remark*}
		The analysis is simplified
		by our choice of a maximal independent set
		and by Assumption~\ref{ass:stability}.
		Indeed, these two assumptions guarantee that
		$\I$ is the only independent set
		that becomes saturated as $\rho \uparrow 1$,
		and that all other classes are absorbed by
		the surplus of items in the classes in $\I$.
		We could also consider a scaling regime
		where these assumptions are not satisfied.
		For instance, we could choose
		an independent set $\I$
		that is not maximal with respect
		to the inclusion relation.
		Under some technical assumptions,
		we would obtain a scaling regime where
		the buffer becomes saturated with
		items of the classes in $\I$ and
		runs out of items of the classes in $\E(\I)$,
		while containing a positive but finite
		number of items of the classes
		in $\V \setminus (\I \cup \E(\I))$.
		This intermediary scaling regime
		becomes apparent in the numerical results
		of Section~\ref{sec:num}.
	\end{remark*}
	
	\begin{remark*}
		The above-defined heavy-traffic regime
		is actually closer to
		a \emph{light-traffic} regime
		for the classes in $\V \setminus \I$,
		as Proposition~\ref{prop:heavy}
		shows that the number of items
		of these classes tends to zero.
		Yet, one can verify that the loads
		of the classes of this set do
		not tend to zero.
		These two observations may seem paradoxical,
		especially with conventional queueing models,
		like the M/M/1 queue, in mind.
		This apparent paradox
		can be explained by recalling that
		the arrival of an item
		can either increase \emph{or decrease}
		the number of present items.
		This is different from
		conventional queueing models,
		in which the arrival of a customer
		always increases the number of present customers.
		Intuitively, in the scaling regime
		studied in this section,
		the arrival of an item
		of a class in $\I$
		will (almost) always
		increase the number of unmatched items,
		while the arrival of an item
		of a class in $\V \setminus \I$
		will (almost) always
		decrease the number of unmatched items.
	\end{remark*}

	\section{Numerical results} \label{sec:num}
	
	In this section,
	we study the toy examples
	of \figurename~\ref{fig:num-toy}
	and evaluate two performance metrics,
	the waiting probability
	and the mean matching time,
	defined in Sections~\ref{subsec:perf-waiting-proba} and~\ref{subsec:perf-mean-number}.
	All numerical results are computed
	using the closed-form expressions
	derived in these two sections.
	As before, we impose that
	$\sum_{i \in \V} \alpha_i = 1$,
	so that $\alpha_i$ is the long-term fraction of items
	that belong to class~$i$, for each $i \in \V$.
	More specifically, in each case,
	we impose that
	$\alpha_2 = \alpha_3 = \ldots = \alpha_N$
	and let the arrival rate $\alpha_1$ of class~$1$ vary
	such that its load~$\rho = \alpha_1 / \alpha(\E_1)$
	belongs to the interval $(0, 1)$.
	This allows us to gain insight into the impact of
	varying the load of one class on performance
	and to illustrate the heavy-traffic result
	of Section~\ref{subsec:heavy}.
	
	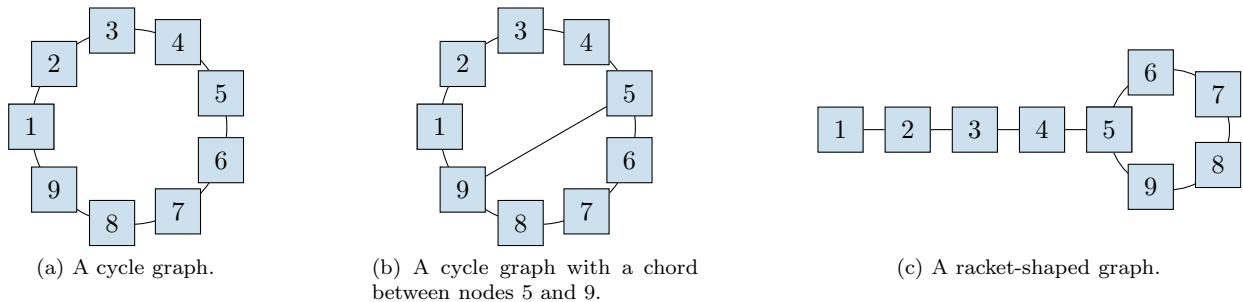
\begin{figure}[b]
		\centering
		\subfloat[A cycle graph.
		\label{fig:cycle-graph}]{%
			\begin{tikzpicture}
				\def\n{9}
				\def\radius{1.3cm}
				
				\foreach \s in {1,...,\n}
				{
					\draw[-]
					({180 - 360/\n * (\s - 1)}:\radius)
					arc
					({180 - 360/\n * (\s - 1)}
					:{180 - 360/\n * (\s)}:\radius);
					\node[smallclass] (\s)
					at ({180 - 360/\n * (\s - 1)}:\radius)
					{$\s$};
				}
				\node[smallclass]
				at ({180}:\radius)
				{$1$};
			\end{tikzpicture}
		}
		\hfill
		\subfloat[A cycle graph
		with a chord between nodes~$5$ and~$9$.
		\label{fig:chord-graph}]{%
			\begin{tikzpicture}
				\def\n{9}
				\def\radius{1.3cm}
				
				\foreach \s in {1,...,\n}
				{
					\draw[-]
					({180 - 360/\n * (\s - 1)}:\radius)
					arc
					({180 - 360/\n * (\s - 1)}
					:{180 - 360/\n * (\s)}:\radius);
					\node[smallclass] (\s)
					at ({180 - 360/\n * (\s - 1)}:\radius)
					{$\s$};
				}
				\node[smallclass] at ({180}:\radius) {$1$};
				\draw[-] (5) -- (9);
				
				\node at ($(1.west)-(.48cm,0)$) {};
				\node (test) at ({0}:\radius) {};
				\node at ($(test.east)+(.48cm,0)$) {};
			\end{tikzpicture}
		}
		\hfill
		\subfloat[A racket-shaped graph.
		\label{fig:racket-graph}]{%
			\begin{tikzpicture}
				\def\n{5}
				\def\nminusone{4}
				\def\m{9}
				\def\radius{.8cm}
				\def\delta{1.2cm}
				
				\foreach \s in {\n,...,\m} {
					\pgfmathsetmacro{\i}{\s + \n}
					\draw[-]
					({180 - 360/\n * (\s - \n)}:\radius)
					arc
					({180 - 360/\n * (\s - \n)}
					:{180 - 360/\n * (\s - \n + 1)}:\radius);
					\node[smallclass] (\s) at
					({180 - 360/\n * (\s - \n)}:\radius)
					{$\s$};
				}
				\node[class]
				at ({180}:\radius)
				{$5$};
				
				\foreach \s in {\nminusone, ..., 1} {
					\pgfmathsetmacro{\t}{\s + 1}
					\node[smallclass] (\s)
					at ($(\t)-(\delta, 0)$) {\s};
				}
				
				\draw[-] (1) -- (2);
				\draw[-] (2) -- (3);
				\draw[-] (3) -- (4);
				\draw[-] (4) -- (5);
				
				\node at ($(\m)-(0,.67cm)$) {};
			\end{tikzpicture}
		}
		\caption{Toy examples with $N = 9$ classes.}
		\label{fig:num-toy}
	\end{figure}

	\subsection{Cycle graph} \label{subsec:num-cycle}
	
	We first consider a stochastic matching model
	with a simple yet insightful compatibility graph,
	namely a cycle graph with an odd number of nodes.
	More specifically,
	we let $N = 2K + 1$ denote the number of classes,
	where $K \in \{1, 2, 3, \ldots\}$,
	and we assume that, for each $i \in \V$,
	items of classes~$i$ and $i+1$ can be matched
	with one another
	(with the convention that $i+1 = 1$ if $i = N$).
	An example of a cycle graph
	with $N = 9$ nodes
	is shown in \figurename~\ref{fig:cycle-graph}.
	The dynamics in a matching model
	with a cycle graph are representative
	of the dynamics obtained with
	more complex compatibility graphs.
	Indeed, as recalled in Section~\ref{subsec:stability},
	a stochastic matching model
	can be stabilized if and only if
	its compatibility graph is non-bipartite,
	which means that it contains
	at least one odd cycle.
	The arrival rates are given by
	\begin{align} \label{eq:circle-load}
		\alpha_1 &= \frac{\frac\rho{K}}{1 + \frac\rho{K}},
		& \alpha_i &= \frac{\frac1{2K}}{1 + \frac\rho{K}},
		\quad i \in \{2, 3, \ldots, N\},
	\end{align}
	so that $\rho$ is the load of class~$1$.
	One can show by a direct reasoning\footnote{%
		Let $\I \in \ind$.
		The sets
		$\I + 1 = \{i + 1, i \in \I\}$
		and $\I - 1 = \{i - 1, i \in \I\}$
		(again with the convention that
		$i + 1 = 1$ if $i = N$
		and $i - 1 = N$ if $i = 1$)
		both contain $|\I|$ nodes
		and are included into $\E(\I)$.
		It suffices to show that
		at least one of these two sets
		contains an element that the other does not.
		In turn, to prove this,
		it suffices to observe that
		an independent set $\I$ contains
		at most $K$ nodes,
		so that at least two of these nodes
		are separated by at least two nodes
		that do not belong to $\I$.%
	}
	that $|\E(\I)| \ge |\I| + 1$
	for each $\I \in \ind$,
	which is sufficient to prove that,
	under the arrival rates~\eqref{eq:circle-load},
	the system is stable for each $\rho \in (0, 1)$.
	In what follows, we focus on the case
	$N = 9$ for simplicity of exposition,
	but the results for
	$N \in \{3, 5, 7, 11, 13, 15\}$
	(not shown here) are similar.
	To illustrate the performance gain
	permitted by the recursive expressions
	of Section~\ref{sec:perf}
	compared to existing expressions,
	we observe that, again with $N = 9$,
	the number of terms to sum
	to derive each metric
	is equal to 75 using dynamic programming
	and to 459 if we naively apply the formulas of
	\cite[Equation~(5)]{MBM21}
	and \cite[Proposition~4]{CDFB20},
	giving a ratio of
	$459/75 \simeq 6.12$;
	with $N = 15$ classes,
	the numbers of terms to sum
	are $1,363$ and $234,405$,
	respectively, giving a ratio of
	$234,405/1,363 \simeq 171.98$.
	
	\begin{figure}[ht]
		\centering
		\hspace{.8cm}
		\begin{tikzpicture}
			\begin{axis}[probaplotstyle, hide axis]
				\addlegendimage{teal, no markers}
				\addlegendentry{Overall};
				
				\addlegendimage{orange, densely dashed, no markers}
				\addlegendentry{1};
				
				\addlegendimage{green, densely dashdotted, no markers}
				\addlegendentry{2 and 9};
				
				\addlegendimage{red, densely dotted, no markers,}
				\addlegendentry{3 and 8};
				
				\addlegendimage{purple, dashed, no markers}
				\addlegendentry{4 and 7};
				
				\addlegendimage{olive, dashdotted, no markers}
				\addlegendentry{5 and 6};
			\end{axis}
		\end{tikzpicture}
		\\
		\pgfplotstableread{cycle.csv}\model
		\begin{tikzpicture}
			\begin{axis}[probaplotstyle]
				
				\addplot+[
				teal, no markers,
				] table[x=load, y=waiting probability 9]{\model};
				
				\addplot+[
				orange, densely dashed, no markers,
				] table[x=load, y=waiting probability 0]{\model};
				
				\addplot+[
				green, densely dashdotted, no markers,
				] table[x=load, y=waiting probability 1]{\model};
				
				\addplot+[
				red, densely dotted, no markers,
				] table[x=load, y=waiting probability 2]{\model};
				
				\addplot+[
				purple, dashed, no markers,
				] table[x=load, y=waiting probability 3]{\model};
				
				\addplot+[
				olive, dashdotted, no markers,
				] table[x=load, y=waiting probability 4]{\model};
				
			\end{axis}
		\end{tikzpicture}
		\hfill
		\begin{tikzpicture}
			\begin{axis}[meanplotstyle]
				
				\addplot+[
				teal, no markers,
				] table[x=load, y=mean waiting time 9]{\model};
				
				\addplot+[
				orange, densely dashed, no markers,
				] table[x=load, y=mean waiting time 0]{\model};
				
				\addplot+[
				green, densely dashdotted, no markers,
				] table[x=load, y=mean waiting time 1]{\model};
				
				\addplot+[
				red, densely dotted, no markers,
				] table[x=load, y=mean waiting time 2]{\model};
				
				\addplot+[
				purple, dashed, no markers,
				] table[x=load, y=mean waiting time 3]{\model};
				
				\addplot+[
				olive, dashdotted, no markers,
				] table[x=load, y=mean waiting time 4]{\model};
				
			\end{axis}
		\end{tikzpicture}
		\caption{Performance (overall and per class) in a cycle
			with $N = 9$ classes.}
		\label{fig:cycle-num}
	\end{figure}
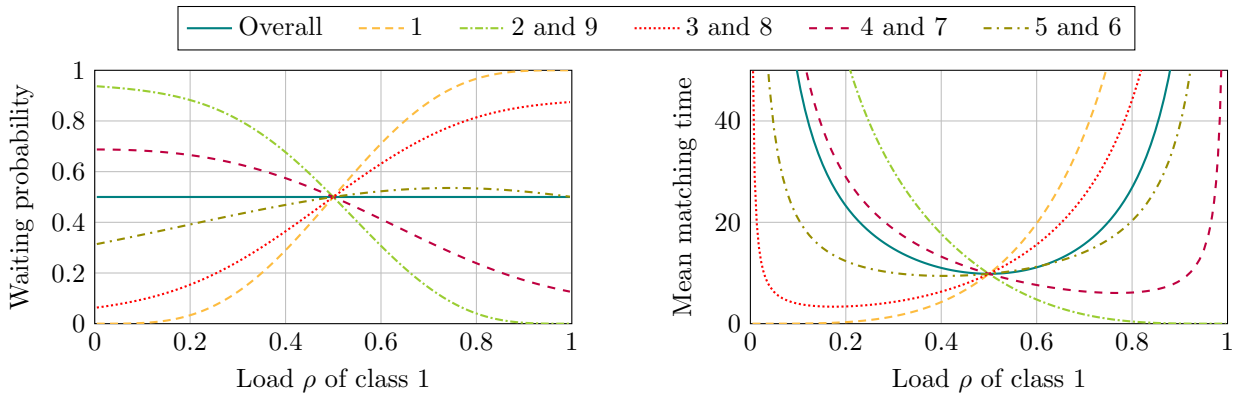
	
	The waiting probability
	and mean matching time
	are shown in \figurename~\ref{fig:cycle-num}.
	Because of the symmetry of the model,
	the performance of a class only depends on
	its distance to class~$1$, so that
	classes~$i + 1$ and $N - i + 1$
	have the same performance
	for each $i \in \{1, 2, \ldots, K\}$.
	Focusing on the waiting probability,
	we can partition classes into two categories,
	those at an odd distance from class~$1$
	(namely, classes 2, 4, 7, and 9)
	and those at an even distance from class~$1$
	(namely, classes 1, 3, 5, 6, and 8).
	Indeed, as the load of class~$1$ increases,
	the waiting probability
	of the former set of classes decreases
	while that of the latter set of classes increases.
	The only exception is
	the waiting probability of classes~$5$ and $6$,
	which first increases and then decreases;
	we will mention this again at the end of this paragraph.
	Intuitively, increasing the load of class~$1$
	has the following cascading effect.
	To preserve the system stability,
	the additional class-$1$ items
	need be matched with items
	of classes~$2$ and $9$,
	so that the waiting probability
	of these two classes decreases.
	The side effect is that
	there are fewer items of classes~$2$ and $9$
	to be matched with items
	of classes~$3$ and $8$,
	so that the waiting probability
	of classes~$3$ and $8$ increases.
	In turn,
	surplus items of classes~$3$ and $8$
	need be matched with
	items of classes~$4$ and $7$
	to preserve stability,
	so that the waiting probability
	of classes~$4$ and $7$ decreases,
	and so on.
	The only exception is classes~$5$ and $6$,
	whose waiting probability
	is first increasing and then decreasing.
	Focusing on class~$5$ for instance,
	the non-monotonic evolution
	of the waiting probability
	is the result of two conflicting effects.
	On the one hand, the even path
	$1$--$2$--$3$--$4$--$5$
	suggests that
	increasing the load of class~$1$
	tends to increase
	the waiting probability of class~$5$.
	On the other hand,
	the odd path
	$1$--$9$--$8$--$7$--$6$--$5$
	suggests that increasing the load of class~$1$
	tends to decrease the waiting probability of class~$5$.
	
	Considering the mean matching time
	allows us to refine the above discussion.
	Indeed, we can observe that,
	except for classes $1$, $2$, and $9$,
	the mean matching time of all classes
	tends to infinity both
	when $\rho$ tends to zero
	and when $\rho$ tends to one.
	This suggests that
	not only the performance
	of classes~$5$ and $6$
	but also those of other classes
	are the result of two opposite effects,
	one that is conveyed by
	the odd-length path from class~$1$
	and the other by
	the even-length path from class~$1$.
	
	To better understand the speeds at which
	the mean matching times of classes tend to infinity,
	let us start with $\rho = \frac12$
	(meaning that all arrival rates are equal)
	and consider the chain reaction
	on the path
	$1$--$2$--$3$--$\cdots$-$8$--$9$
	as the load $\rho$ decreases
	and tends to 1.
	The first class to be impacted is class~$2$,
	as this class needs the presence of class~$1$
	to remain stable,
	and indeed the mean matching time
	of class~$2$ tends to infinity the fastest.
	The side effect is that
	class-$3$ items are
	matched more frequently with class-$2$ items,
	so that the mean matching time of class~$3$ decreases.
	This implies that
	class-$3$ items are less available
	for class~$4$,
	so that the mean matching time
	of class~$4$ is the next
	to tend to infinity.
	By repeating this reasoning,
	we obtain that the mean matching time
	of class~$6$ is the next to tend to infinity,
	followed by the mean matching time of class~$8$.
	A similar phenomenon occurs
	on the path
	$1$--$9$--$8$--$\cdots$--$3$--$2$,
	so that the mean matching times
	of classes~$9$, $7$, $5$, and $3$ also
	tend to infinity one after the other.

	\subsection{Cycle graph with a chord}
	
	To complement our observations,
	we break the model symmetry by
	adding a chord between classes~$5$ and $9$.
	The arrival rates are still
	given by~\eqref{eq:circle-load}.
	It follows from~\eqref{eq:stability}
	that adding an edge to the compatibility graph
	cannot reduce the stability region
	of the corresponding stochastic matching model,
	so that the model is still stable
	for each $\rho \in (0, 1)$.
	We again focus on a model with $N = 9$ classes. We have verified that the reasoning we develop below generalizes
	to other numbers of classes and other placements of the chord relative to class~1, although the results vary qualitatively.
	
	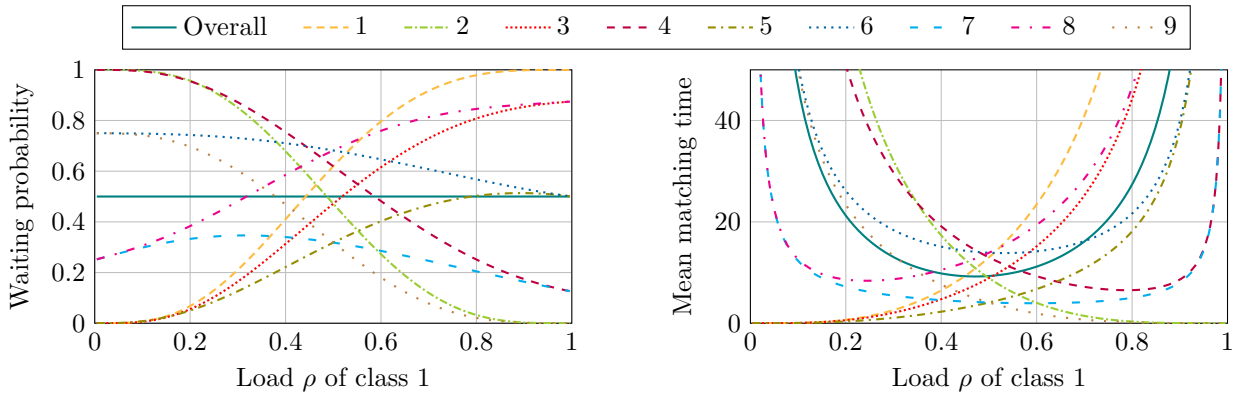
\begin{figure}[b]
		\centering
		\hspace{.8cm}
		\begin{tikzpicture}
			\begin{axis}[probaplotstyle, hide axis]
				\addlegendimage{teal, no markers}
				\addlegendentry{Overall};
				
				\addlegendimage{orange, densely dashed, no markers}
				\addlegendentry{1};
				
				\addlegendimage{green, densely dashdotted, no markers}
				\addlegendentry{2};
				
				\addlegendimage{red, densely dotted, no markers,}
				\addlegendentry{3};
				
				\addlegendimage{purple, dashed, no markers}
				\addlegendentry{4};
				
				\addlegendimage{olive, dashdotted, no markers}
				\addlegendentry{5};
				
				\addlegendimage{blue, dotted, no markers}
				\addlegendentry{6};
				
				\addlegendimage{cyan, loosely dashed, no markers}
				\addlegendentry{7};
				
				\addlegendimage{magenta, loosely dashdotted, no markers}
				\addlegendentry{8};
				
				\addlegendimage{brown, loosely dotted, no markers}
				\addlegendentry{9};
			\end{axis}
		\end{tikzpicture}
		\\
		\pgfplotstableread{chord.csv}\model
		\begin{tikzpicture}
			\begin{axis}[probaplotstyle,
				]
				
				\addplot+[
				teal, no markers,
				] table[x=load, y=waiting probability 9]{\model};
				
				\addplot+[
				orange, densely dashed, no markers,
				] table[x=load, y=waiting probability 0]{\model};
				
				\addplot+[
				green, densely dashdotted, no markers,
				] table[x=load, y=waiting probability 1]{\model};
				
				\addplot+[
				red, densely dotted, no markers,
				] table[x=load, y=waiting probability 2]{\model};
				
				\addplot+[
				purple, dashed, no markers,
				] table[x=load, y=waiting probability 3]{\model};
				
				\addplot+[
				olive, dashdotted, no markers,
				] table[x=load, y=waiting probability 4]{\model};
				
				\addplot+[
				blue, dotted, no markers,
				] table[x=load, y=waiting probability 5]{\model};
				
				\addplot+[
				cyan, loosely dashed, no markers,
				] table[x=load, y=waiting probability 6]{\model};
				
				\addplot+[
				magenta, loosely dashdotted, no markers,
				] table[x=load, y=waiting probability 7]{\model};
				
				\addplot+[
				brown, loosely dotted, no markers,
				] table[x=load, y=waiting probability 8]{\model};
			\end{axis}
		\end{tikzpicture}
		\hfill
		\begin{tikzpicture}
			\begin{axis}[meanplotstyle]
				
				\addplot+[
				teal, no markers,
				] table[x=load, y=mean waiting time 9]{\model};
				
				\addplot+[
				orange, densely dashed, no markers,
				] table[x=load, y=mean waiting time 0]{\model};
				
				\addplot+[
				green, densely dashdotted, no markers,
				] table[x=load, y=mean waiting time 1]{\model};
				
				\addplot+[
				red, densely dotted, no markers,
				] table[x=load, y=mean waiting time 2]{\model};
				
				\addplot+[
				purple, dashed, no markers,
				] table[x=load, y=mean waiting time 3]{\model};
				
				\addplot+[
				olive, dashdotted, no markers,
				] table[x=load, y=mean waiting time 4]{\model};
				
				\addplot+[
				blue, dotted, no markers,
				] table[x=load, y=mean waiting time 5]{\model};
				
				\addplot+[
				cyan, loosely dashed, no markers,
				] table[x=load, y=mean waiting time 6]{\model};
				
				\addplot+[
				magenta, loosely dashdotted, no markers,
				] table[x=load, y=mean waiting time 7]{\model};
				
				\addplot+[
				brown, loosely dotted, no markers,
				] table[x=load, y=mean waiting time 8]{\model};
				
			\end{axis}
		\end{tikzpicture}
		\caption{Performance (overall and per class) in a cycle
			with $N = 9$ classes
			supplemented with a chord
			between classes $5$ and $9$.}
		\label{fig:chord-num}
	\end{figure}
	
	The results are shown in
	\figurename~\ref{fig:chord-num}.
	Adding a chord breaks
	the symmetry between classes,
	but we can still divide classes
	into several categories
	depending on the limiting behavior
	of their waiting probability
	and mean matching time
	as the load $\rho$ tends to either zero or one.
	Let us first focus on
	the limiting regime where
	the load $\rho$ tends to zero.
	In view of the mean matching times,
	the first class to become unstable
	is class~$2$,
	which is natural because
	the supply rate of class~$3$ items
	is not sufficient to maintain its stability.
	The main difference with the previous scenario
	is that the stability of class~$9$
	is temporarily preserved by its compatibility
	with classes~$8$ and~$5$.
	The overabundance of class~$2$
	leads to the exhaustion of class~$3$,
	which in turn leads to the overabundance of class~$4$,
	which in turn leads to the exhaustion of class~$5$.
	Once class~$5$ is exhausted, the next classes
	to become unstable are its neighbors,
	classes~$6$ and $9$, which eventually leads to
	the overabundance of classes~$7$ and~$8$ as well.
	The limiting behavior as the load $\rho$
	tends to one is close to that observed
	for the cycle without chord.
	The reason is that the first immediate impact
	of increasing the load of class~$1$
	is the exhaustion of class~$9$ (as well as class~$2$),
	rendering the presence of the chord useless,
	so that then we are back to
	the scenario of Section~\ref{subsec:num-cycle}.

	\subsection{Racket-shaped graph}
	
	We finally consider the racket-shaped graph
	of \figurename~\ref{fig:racket-graph}.
	Although a line alone cannot be stabilized
	because it forms a bipartite graph,
	this example gives us an idea of
	the dynamics that may arise in such a graph.
	One can also think of this graph as
	being obtained by removing
	the edge between nodes $1$ and $9$
	in the cycle graph with a chord
	of \figurename~\ref{fig:chord-graph}.
	The arrival rates of classes are given by
	\begin{align*}
		\alpha_1&= \frac{\frac\rho{N-1}}{1 + \frac\rho{N-1}},
		&
		\alpha_i &= \frac{\frac1{N-1}}{1 + \frac\rho{N-1}},
		\quad i \in \{2, 3, \ldots, N\},
	\end{align*}
	so that $\rho$ is again the load of class~$1$.
	One can do a similar reasoning
	as for the cycle to show that
	this system is stable
	whenever $\rho \in (0, 1)$.
	We again focus on a scenario
	with $N = 9$ nodes in total,
	$4$ of which belong to the line part
	and $5$ to the cycle part.
	In general, we observed that
	the qualitative behavior of the results
	depends mainly on the parity of
	the numbers of classes
	on the line part
	and on the circle part of the graph,
	and we may again adapt the reasoning below
	to explain this qualitative behavior.
	
	The results for the racket-shaped graph
	of \figurename~\ref{fig:racket-graph}
	are shown in \figurename~\ref{fig:racket-num}.
	Along the line part of the graph,
	the impact of increasing or decreasing
	the load $\rho$ of class~$1$ is as one could expect.
	For instance, as $\rho$ tends to one,
	class-$2$ items become scarce,
	which leads to the instability of class~$3$,
	which in turn leads to the scarcity of class~$4$.
	Compared to the cycle with a chord,
	the most striking observation is that
	classes 5, 6, 7, 8, and 9
	remain stable as the load~$\rho$ tends to one,
	and that these classes have the same asymptotic
	waiting probability and mean matching time.
	The reason is that, asymptotically,
	the model restricted to
	classes~$5$ to $9$ evolves like
	a cycle with five classes
	that all have the same arrival rate,
	and this model is stable.
	
	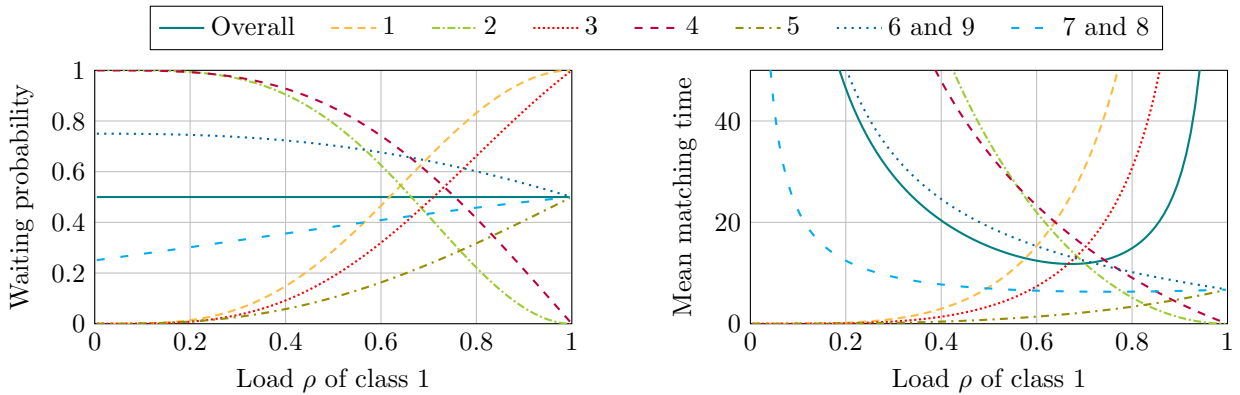
\begin{figure}[b]
		\centering
		\hspace{.8cm}
		\begin{tikzpicture}
			\begin{axis}[probaplotstyle, hide axis]
				\addlegendimage{teal, no markers}
				\addlegendentry{Overall};
				
				\addlegendimage{orange, densely dashed, no markers}
				\addlegendentry{1};
				
				\addlegendimage{green, densely dashdotted, no markers}
				\addlegendentry{2};
				
				\addlegendimage{red, densely dotted, no markers,}
				\addlegendentry{3};
				
				\addlegendimage{purple, dashed, no markers}
				\addlegendentry{4};
				
				\addlegendimage{olive, dashdotted, no markers}
				\addlegendentry{5};
				
				\addlegendimage{blue, dotted, no markers}
				\addlegendentry{6 and 9};
				
				\addlegendimage{cyan, loosely dashed, no markers}
				\addlegendentry{7 and 8};
			\end{axis}
		\end{tikzpicture}
		\\
		\pgfplotstableread{racket.csv}\model
		\begin{tikzpicture}
			\begin{axis}[probaplotstyle,
				legend columns=10,
				]
				
				\addplot+[
				teal, no markers,
				] table[x=load, y=waiting probability 9]{\model};
				
				\addplot+[
				orange, densely dashed, no markers,
				] table[x=load, y=waiting probability 0]{\model};
				
				\addplot+[
				green, densely dashdotted, no markers,
				] table[x=load, y=waiting probability 1]{\model};
				
				\addplot+[
				red, densely dotted, no markers,
				] table[x=load, y=waiting probability 2]{\model};
				
				\addplot+[
				purple, dashed, no markers,
				] table[x=load, y=waiting probability 3]{\model};
				
				\addplot+[
				olive, dashdotted, no markers,
				] table[x=load, y=waiting probability 4]{\model};
				
				\addplot+[
				blue, dotted, no markers,
				] table[x=load, y=waiting probability 5]{\model};
				
				\addplot+[
				cyan, loosely dashed, no markers,
				] table[x=load, y=waiting probability 6]{\model};
				
			\end{axis}
		\end{tikzpicture}
		\hfill
		\begin{tikzpicture}
			\begin{axis}[meanplotstyle]
				
				\addplot+[
				teal, no markers,
				] table[x=load, y=mean waiting time 9]{\model};
				
				\addplot+[
				orange, densely dashed, no markers,
				] table[x=load, y=mean waiting time 0]{\model};
				
				\addplot+[
				green, densely dashdotted, no markers,
				] table[x=load, y=mean waiting time 1]{\model};
				
				\addplot+[
				red, densely dotted, no markers,
				] table[x=load, y=mean waiting time 2]{\model};
				
				\addplot+[
				purple, dashed, no markers,
				] table[x=load, y=mean waiting time 3]{\model};
				
				\addplot+[
				olive, dashdotted, no markers,
				] table[x=load, y=mean waiting time 4]{\model};
				
				\addplot+[
				blue, dotted, no markers,
				] table[x=load, y=mean waiting time 5]{\model};
				
				\addplot+[
				cyan, loosely dashed, no markers,
				] table[x=load, y=mean waiting time 6]{\model};
				
			\end{axis}
		\end{tikzpicture}
		\caption{Performance (overall and per class) in the racket-shaped graph
			with $N = 9$ classes
			shown in \figurename~\ref{fig:racket-graph}.}
		\label{fig:racket-num}
	\end{figure}
	
	If we again focus on the asymptotic regime
	where the load $\rho$ tends to one,
	it seems that the overall mean matching time
	is lower in the racket-shaped graph
	than in the cycle with a chord,
	which may seem counterintuitive
	after observing that the former is obtained by
	\emph{removing} an edge from the latter.
	This observation is however consistent with
	the discussion of~\cite{CDFB20},
	which compared this phenomenon
	with Braess's paradox in road networks.

	\section{Conclusion} \label{sec:ccl}
	
	In this paper, we proved that
	a stochastic non-bipartite matching model
	is an order-independent loss queue.
	This equivalence allowed us
	to propose simpler proofs for existing results
	and to derive new results
	regarding stochastic matching models.
	In particular, we used
	the order-independent queue framework
	to give alternative proofs
	for the product-form stationary distribution
	and the stability condition
	derived in \cite[Theorem~1]{MBM21}.
	By adapting numerical methods
	developed in~\cite{SV15,C19,CBL21},
	we also derived new closed-form expressions
	for the normalization constant,
	the mean matching time,
	and other performance metrics.
	In turn, these formulas were applied
	to gain insight into
	the impact of parameters on performance.
	By analogy with queueing theory,
	we highlighted the importance
	of the so-called load parameters
	and studied the system behavior
	in the heavy-traffic regime,
	in which the load of a subset
	of classes becomes critical.
	The obtained formulas
	were also put in practice with numerical results.
	
	For future works,
	we would like to derive simpler formulas
	for the performance metrics.
	Indeed, even though our recursive formulas
	allow for a dynamic-programming approach,
	complexity remains exponential
	in the number of classes in general,
	which limits their applicability
	to models with only a few classes.
	In parallel, we would like to
	understand if and how these results
	can be adapted to analyze the performance
	of stochastic matching models
	with a bipartite compatibility graph,
	like the one studied
	in \cite{CKW09,AW12,BGM13,ABMW17,AKRW18},
	in which items arrive in pairs.
	Lastly, as mentioned before,
	we would like to explore the generalizations
	of the stochastic matching model
	made possible by the equivalence
	with order-independent loss queues.
	Yet another avenue for future work,
	which goes beyond the scope
	of stochastic matching models,
	consists of extending the heavy-traffic result
	proved for multi-class multi-server queues in~\cite{CBL21}
	to other order-independent (loss) queues,
	like those considered in~\cite[Section~4.3]{C19}.
	
	\paragraph{Acknowledgement}
	
	The author is grateful to
	Sem Borst for his valuable comments
	on an earlier draft of this paper
	and to Ellen Cardinaels
	for mentioning the reference~\cite{BMMR20}.
	The author thanks both of them
	for helpful discussions
	on the related work~\cite{CBL21}.
	The author is also grateful to
	the anonymous editor and reviewer
	for their valuable suggestions
	on the contents and exposition of the paper,
	and in particular for suggesting
	a simplification in Section~\ref{sec:oi}.


	\appendix
	
	\section*{Appendix: Proof of Proposition~\ref{prop:heavy}}
	
	As announced,
	the proof of each property
	relies on the corresponding expression
	derived in Section~\ref{sec:perf}.
	The key argument consists of observing that
	Equation~\eqref{eq:saturation}
	and Assumption~\ref{ass:stability}
	can be reformulated as follows:
	as $\rho \uparrow 1$,
	$\rho(\J)$ tends to one if $\J = \I$
	(as we have $\rho(\I) = \rho$)
	and to a constant strictly less than one
	if $\J \in \ind \setminus \{\I\}$.
	
	\begin{proof}[Proof of Property~\ref{item:heavy-piI}.]
		According to Proposition~\ref{prop:piI-rec}
		and Equation~\eqref{eq:piI-load},
		the stationary distribution
		of the set of unmatched classes
		can be rewritten as
		\begin{align} \label{eq:piI-ratio}
			\pi(\J)
			= \frac{\psi(\J)}{\sum_{\K \in \inde} \psi(\K)},
			\quad \J \in \inde,
		\end{align}
		where $\psi(\J)$ is given by $\psi(\emptyset) = 1$, and
		\begin{align*}
			\psi(\J)
			&= \frac{\rho(\J)}{1 - \rho(\J)}
			\left(
			\sum_{i \in \J} \frac{\alpha_i}{\alpha(\J)}
			\psi(\J \setminus \{i\})
			\right),
			\quad \J \in \ind.
		\end{align*}
		Since $\rho(\J)$ tends to one if $\J = \I$
		and to a constant strictly less than one
		if $\J \in \ind \setminus \{\I\}$,
		it follows that $\psi(\J)$ tends to
		infinity if $\J = \I$
		and to a finite constant
		if $\J \in \inde \setminus \{\I\}$.
		We conclude by inserting
		this into~\eqref{eq:piI-ratio}.
	\end{proof}
	
	\begin{proof}[Proof of Property~\ref{item:heavy-wi}]
		The result for the classes in~$\I$
		follows by inserting Property~\ref{item:heavy-piI}
		into~\eqref{eq:waiting}.
		The result for the classes in $\V \setminus \I$
		is then a consequence of~\eqref{eq:balance}
		and~\eqref{eq:saturation}.
	\end{proof}
	
	\begin{proof}[Proof of
		Properties~\ref{item:heavy-Li} and~\ref{item:heavy-ETi}]
		Consider a class $i \in \I$.
		According to Proposition~\ref{prop:LiI-rec}
		and Equation~\eqref{eq:piI-ratio},
		the expected number of class-$i$ items
		can be rewritten as
		\begin{align} \label{eq:Li-ratio}
			L_i = \frac
			{\sum_{\J \in \inde: i \in \J} L_i(\J) \psi(\J)}
			{\sum_{\J \in \inde} \psi(\J)},
		\end{align}
		where $L_i(\J) \psi(\J)$ is given by
		$L_i(\J) \psi(\J) = 0$ if $i \notin \J$, and
		\begin{align*}
			L_i(\J) \psi(\J)
			= \frac{\rho(\J)}{1 - \rho(\J)}
			\begin{aligned}[t]
				\Bigg(
				&\frac{\alpha_i}{\alpha(\J)} \psi(\J)
				+ \frac{\alpha_i}{\alpha(\J)} \psi(\J \setminus \{i\}) \\
				&+ \sum_{j \in \J \setminus \{i\}}
				\frac{\alpha_j}{\alpha(\J)}
				L_i(\J \setminus \{j\})
				\psi(\J \setminus \{j\})
				\Bigg),
			\end{aligned}
		\end{align*}
		if $i \in \J$.
		By following a similar reasoning
		as in the proof
		of Property~\ref{item:heavy-piI}
		and recalling that
		$\rho(\J)$ tends to one if $\J = \I$
		and to a constant strictly less than one
		if $\J \in \ind \setminus \{\I\}$, we obtain
		\begin{align*}
			L_i \underset{\rho \uparrow 1}{\sim} \frac
			{ \frac{\rho}{1 - \rho}
				\frac{\alpha_i}{\alpha(\I)} \psi(\I) }
			{\psi(\I)}.
		\end{align*}
		After simplification, we conclude
		by observing that~\eqref{eq:scaling}
		implies $\alpha_i / \alpha(\I) = p_i$.
		If we consider instead $i \in \V \setminus \I$,
		a similar reasoning shows that
		all terms in the numerator
		of~\eqref{eq:Li-ratio}
		have a finite limit,
		while its denominator still tends to infinity,
		so that $L_i$ tends to zero
		as $\rho \uparrow 1$.
		The limiting result for $L$ follows by summation,
		and that for
		$\esp{T_i}$, $i \in \V$,
		follows by applying Little's law
		and simplifying the result.
	\end{proof}
	
	\begin{proof}[Proof of Property~\ref{item:heavy-X}]
		Consider the random vector
		$S = (S_1, S_2, \ldots, S_N)$ defined by
		\begin{align*}
			S_i = \begin{cases}
				(1 - \rho) X_i,
				&i \in \I, \\
				X_i,
				& i \in \V \setminus \I.
			\end{cases}
		\end{align*}
		The Laplace-Stieltjes transform
		of this random vector is defined,
		for each $u \in \R_+^N$, by
		\begin{align*}
			\varphi_S(u)
			&= \esp{\prod_{i \in \V} e^{- u_i S_i}}
			= \esp{
				\left(
				\prod_{i \in \I} \e^{- u_i (1 - \rho) X_i}
				\right) \left(
				\prod_{i \in \V \setminus \I} \e^{- u_i X_i}
				\right)
			}
			= \esp{\prod_{i \in \V} {z_i}^{X_i}}
			= g_X(z),
		\end{align*}
		where the vector
		$z = (z_1, z_2, \ldots, z_N)$
		is given by
		$z_i = \e^{- u_i (1 - \rho)}$ for $i \in \I$
		and $z_i = \e^{- u_i}$ for $i \in \V \setminus \I$.
		We will use Proposition~\ref{prop:pgf-rec}
		to show that
		this Laplace-Stieltjes transform satisfies
		\begin{align} \label{eq:limit-X}
			\varphi_S(u)
			\xrightarrow[\rho \uparrow 1]{}
			\frac1{1 + \sum_{i \in \I} p_i u_i}.
		\end{align}
		One can verify that this is also
		the Laplace-Stieltjes transform
		of a random vector
		$\tilde S = (\tilde S_1, \tilde S_2,
		\ldots, \tilde S_N)$
		such that
		$\tilde S_i = \expo(1) p_i$
		for each $i \in \I$
		and $\tilde S_i = 0$
		for each $i \in \V \setminus \I$.
		According to \cite[Section 13.1, Theorem 2]{F66},
		this suffices to conclude that
		$S$ weakly converges to $\tilde S$
		as $\rho \uparrow 1$.
		
		By following a similar approach
		as in the proof of
		Properties~\ref{item:heavy-piI}
		and~\ref{item:heavy-Li}
		and recalling that
		$z_i \uparrow 1$ if $i \in \I$
		and $z_i = e^{- u_i} < 1$
		if $i \in \V \setminus \I$,
		we obtain
		\begin{align*}
			\varphi_S(u)
			\underset{\rho \uparrow 1}{\sim} \frac
			{\psi_{z \alpha, \alpha}(\I)}
			{\psi_{\alpha, \alpha}(\I)}.
		\end{align*}
		Using~\eqref{eq:pilambda-rec},
		we can rewrite the right-hand side
		of this equivalence as
		\begin{align} \label{eq:dominant}
			\frac
			{\psi_{z \alpha, \alpha}(\I)}
			{\psi_{\alpha, \alpha}(\I)}
			= \frac{ \displaystyle
				\frac
				{\sum_{i \in \I} z_i \alpha_i
					\psi_{z \alpha, \alpha}(\I \setminus \{i\})}
				{\alpha(\V \setminus \I) - (z \alpha)(\I)}
			}{ \displaystyle
				\frac
				{\sum_{i \in \I} \alpha_i
					\psi_{\alpha, \alpha}(\I \setminus \{i\})}
				{\alpha(\V \setminus \I) - \alpha(\I)}
			}
			= \frac
			{\sum_{i \in \I} z_i \alpha_i
				\psi_{z \alpha, \alpha}(\I \setminus \{i\})}
			{\sum_{i \in \I} \alpha_i
				\psi_{\alpha, \alpha}(\I \setminus \{i\})}
			\cdot
			\frac
			{1 - \frac{\alpha(\I)}{\alpha(\V \setminus \I)}}
			{1 - \frac{(z \alpha)(\I)}{\alpha(\V \setminus \I)}}.
		\end{align}
		As $\rho \uparrow 1$,
		the first factor tends to~$1$
		because $z_i \uparrow 1$ for each $i \in \I$,
		while we can show with a Taylor series expansion
		that the second factor tends to
		$(1 + \sum_{i \in \I} p_i u_i)^{-1}$.
		This proves~\eqref{eq:limit-X}
		and completes the proof.
	\end{proof}
	
	\begin{proof}[Proof of Property~\ref{item:heavy-Ti}]
		We will show that,
		for each $i \in \I$, we have
		\begin{align} \label{eq:limit-Ti}
			\lim_{\rho \uparrow 1}
			\varphi_{(1 - \rho) T_i}(u)
			\xrightarrow[\rho \uparrow 1]{}
			\frac{\frac12}{\frac12 + u},
			\quad i \in \I,
			\quad \qquad
			\lim_{\rho \uparrow 1}
			\varphi_{T_i}(u)
			\xrightarrow[\rho \uparrow 1]{}
			1,
			\quad i \in \V \setminus \I.
		\end{align}
		We recognize the Laplace-Stieltjes transform
		of an exponentially-distributed random variable
		with rate $\frac12$ for $i \in \I$
		and the degenerate Laplace-Stieltjes transform
		of a constant random variable equal to~$0$
		for $i \in \V \setminus \I$.
		The conclusion of the proposition
		then again follows
		from \cite[Section 13.1, Theorem 2]{F66}.
		
		First let $i \in \I$.
		According to Proposition~\ref{prop:lst-rec}, we have
		\begin{align*}
			\varphi_{(1 - \rho) T_i}(u)
			= \varphi_{T_i}((1 - \rho) u)
			= g_X(z),
		\end{align*}
		where $z = (z_1, z_2, \ldots, z_N)$
		is given by
		$z_i = 1 - (1 - \rho) \frac{u}{\alpha_i}$
		and $z_j = 1$ for each $j \in \V \setminus \{i\}$.
		Since $z_i \uparrow 1$ as $\rho \uparrow 1$,
		and by following a similar reasoning
		as in the proof of Property~\ref{item:heavy-X},
		we obtain
		\begin{align*}
			\varphi_{(1 - \rho) T_i}(u)
			\underset{\rho \uparrow 1}{\sim} \frac
			{\psi_{z \alpha, \alpha}(\I)}
			{\psi_{\alpha, \alpha}(\I)}.
		\end{align*}
		The right-hand side of this equivalence
		is again given by~\eqref{eq:dominant}.
		By using the definition of $z$,
		we can show that the second factor
		in~\eqref{eq:dominant}
		simplifies to
		$\alpha(\V \setminus \I) / (\alpha(\V \setminus \I) + u)$,
		which tends to $\frac12 / (\frac12 + u)$
		according to~\eqref{eq:limit-Ti}.
		Since we also know that
		the first factor in~\eqref{eq:dominant} tends to~$1$,
		this proves~\eqref{eq:limit-Ti}.
		The proof for $i \in \V \setminus \I$ is similar,
		the main difference being that we have directly
		$\psi_{z \alpha, \alpha}(\I)
		= \psi_{\alpha, \alpha}(\I)$
		because $i \notin \I$.
	\end{proof}
	
\end{document}